\documentclass[a4paper,reqno, 10pt]{amsart}

\textheight 220mm
\textwidth 145mm
\hoffset -10mm
\tolerance=9999
\widowpenalty=300
\clubpenalty=300

\usepackage{amssymb}
\usepackage{amstext}
\usepackage{amsmath}
\usepackage{amscd}
\usepackage{amsthm}
\usepackage{amsfonts}
\usepackage{dsfont}
\usepackage{enumerate}
\usepackage{graphicx}
\usepackage{latexsym}
\usepackage{mathrsfs}
\usepackage[all]{xy}
\xyoption{all}

\usepackage{pstricks}
\usepackage{lscape}

\newtheorem{theorem2}{Theorem}

\newtheorem{theorem}{Theorem}[section]

\newtheorem{corollary}[theorem]{Corollary}
\newtheorem{lemma}[theorem]{Lemma}
\newtheorem{proposition}[theorem]{Proposition}
\newtheorem{definition-proposition}[theorem]{Definition-Proposition}
\newtheorem{question}[theorem]{Question}
\newtheorem{conjecture}[theorem]{Conjecture}

\theoremstyle{definition}
\newtheorem{definition}[theorem]{Definition}
\newtheorem{example}[theorem]{Example}

\theoremstyle{remark}
\newtheorem{remark}[theorem]{Remark}

\newcommand{\CH}{\operatorname{CH}\nolimits}

\renewcommand{\hom}{\mathrm{hom}}
\newcommand{\alg}{\mathrm{alg}}
\newcommand{\C}{\mathbf{C}}

\newcommand{\Q}{\mathbf{Q}}
\newcommand{\Z}{\mathbf{Z}}

\newcommand{\N}{\widetilde{N}}

\newcommand{\im}{\mathrm{Im}\,}
\newcommand{\End}{\mathrm{End}}
\newcommand{\Hom}{\mathrm{Hom}}
\newcommand{\Gal}{\mathrm{Gal}}
\newcommand{\id}{\mathrm{id}}

\renewcommand\r{\rightarrow}

\renewcommand{\ker}{\mathrm{Ker}\,}

\newcommand\h{\mathfrak{h}}

\newcommand{\arr}[1]{\overset{#1}{\longrightarrow}}

\begin{document}

\title{Remarks on motives of abelian type}
\author{Charles Vial}
\date{}

\address{DPMMS, University of Cambridge, Wilberforce Road, Cambridge,
  CB3 0WB, UK} \email{c.vial@dpmms.cam.ac.uk}
\urladdr{http://www.dpmms.cam.ac.uk/~cv248/}
\thanks{2010 {\em
    Mathematics Subject Classification.} Primary 14C15; Secondary 14C25, 14K15}
\thanks{{\em Key words and phrases.} Algebraic cycles, Chow groups, motives,
abelian varieties, finite-dimensionality}
\thanks{ The author is
  supported by an EPSRC Early Career Fellowship EP/K005545/1.}

\begin{abstract}
  A motive over a field $k$ is of abelian type if it belongs to the
  thick and rigid subcategory of Chow motives spanned by the motives
  of abelian varieties over $k$. This paper contains three sections of
  independent interest. First, we show that a motive which becomes of
  abelian type after a base field extension of algebraically closed
  fields is of abelian type. Given a field extension $K/k$ and a
  motive $M$ over $k$, we also show that $M$ is finite-dimensional if
  and only if $M_K$ is finite-dimensional. As a corollary, we obtain
  Chow--K\"unneth decompositions for varieties that become isomorphic
  to an abelian variety after some field extension.  Second, let
  $\Omega$ be a universal domain containing $k$. We show that Murre's
  conjectures for motives of abelian type over $k$ reduce to Murre's
  conjecture (D) for products of curves over $\Omega$. In particular,
  we show that Murre's conjecture (D) for products of curves over
  $\Omega$ implies Beauville's vanishing conjecture on abelian
  varieties over $k$. Finally, we give criteria on Chow groups for a
  motive to be of abelian type. For instance, we show that $M$ is of
  abelian type if and only if the total Chow group of algebraically
  trivial cycles $\CH_*(M_\Omega)_\alg$ is spanned, via the action of
  correspondences, by the Chow groups of products of curves. We also
  show that a morphism of motives $f: N \r M$, with $N$
  finite-dimensional, which induces a surjection $f_* :
  \CH_*(N_\Omega)_\alg \r \CH_*(M_\Omega)_\alg$ also induces a
  surjection $f_* : \CH_*(N_\Omega)_\hom \r \CH_*(M_\Omega)_\hom$ on
  homologically trivial cycles.
\end{abstract}
\maketitle

\section*{Introduction}

Let $k$ be a field. A \emph{motive over $k$} is a motive for rational
equivalence defined over $k$ with rational coefficients. The motive of
a smooth projective variety $X$ over $k$ is denoted by $\h(X)$. We
refer to \cite{Scholl} for definitions and basic properties. Our
notations will only differ from loc.\ cit.\ by the use of a
covariant set-up rather than a contravariant one. For instance, with
our conventions, $\h(\mathbb{P}^1_k) = \mathds{1} \oplus \mathds{1}(1)$.  A
motive $M$ is said to be \emph{effective} if it is isomorphic to the
direct summand of the motive of a smooth projective variety,
equivalently if it is isomorphic to a motive of the form $(X,p,n)$ for
some smooth projective variety $X$, some idempotent $p \in \End(\h(X))
:= \CH_{\dim X}(X \times X)$ and some integer $n \geq 0$. A motive $M$
is said to be of \emph{abelian type} if it is isomorphic to a motive
that belongs to the thick and rigid subcategory of motives over $k$
spanned by the motives of abelian varieties. Equivalently, $M$ is of
abelian type if one of its twist $M(n) := M \otimes \mathds{1}(n)$ is
isomorphic to the direct summand of the motive of a product of curves.
Finally, we refer to Kimura \cite{Kimura} for the notion of
\emph{finite-dimensionality} of motives. Motives of curves are
finite-dimensional and finite-dimensionality is stable under tensor
product, direct sum and direct summand. As such, motives of abelian
type are finite-dimensional. It is conjectured that all motives are
finite-dimensional. Given a finite-dimensional motive $M$, a crucial
result of Kimura \cite[Proposition 7.5]{Kimura} states that $\ker
\big(\End(M) \r \End(\overline{M}) \big)$, where $\overline{M}$
denotes the reduction modulo numerical equivalence of $M$, is a
nilpotent ideal of $\End(M)$. \medskip

This paper contains three independent sections. \medskip

\noindent \textbf{0.1.} Let $\overline{k}$ be an algebraic closure of
$k$ and let $K/k$ be an extension of $k$ which is algebraically
closed.  Our first result is the following rigidity property for
motives of abelian type.

\begin{theorem2} \label{rigidity} Let $M$ be a motive over $k$. Then
  $M_{\overline{k}}$ is of abelian type if and only if $M_K$ is of
  abelian type.
\end{theorem2}

The proof, which is given in Subsection \ref{S:rigidity}, proceeds
through a standard specialization argument. More interesting is the
following descent theorem the proof of which is given in Subsection
\ref{S:fdrigidity}.

\begin{theorem2} \label{fdrigidity} Let $M$ be a motive over $k$. Then
  $M$ is finite-dimensional if and only if $M_K$ is
  finite-dimensional.
\end{theorem2}

The main point in the proof of Theorem \ref{fdrigidity} consists,
assuming the existence of a decomposition of $M_K$ into a direct sum
of an odd-dimensional part and an even-dimensional part, in exhibiting
a similar decomposition of $M$. This is \emph{a priori} not obvious as
the size of the Chow groups of a variety might strictly increase after
base-change to a field extension. Abelian varieties (and, more
generally, motives of abelian type) are known \cite{DM} to have a
Chow--K\"unneth decomposition. As a consequence of Theorem
\ref{fdrigidity}, we obtain in Corollary \ref{C:potab} the existence
of a Chow--K\"unneth decomposition for varieties over $k$ that become
isomorphic to an abelian variety after some field extension, thereby
generalizing the case of abelian varieties which was taken care of by
Deninger--Murre \cite{DM}. \medskip

\noindent \textbf{0.2.} Let now $\Omega$ be a universal domain, that
is, an algebraically closed field of infinite transcendence degree
over its prime subfield, containing $k$.  Our second main result is
concerned with Murre's conjectures (recalled in Conjecture
\ref{Murreconj}) in the case of motives of abelian type. It is known
that any abelian variety $A$ over $k$ is endowed with a
Chow--K\"unneth decomposition which diagonalizes the induced action of
the multiplication-by-$m$ maps on the Chow groups of $A$; see Remark
\ref{R:Beauville}.  Proposition \ref{indB} shows that Murre's
conjecture (B) does not depend on the choice of a Chow--K\"unneth
decomposition for $A$, so that Beauville's vanishing conjecture
\cite{Beauville} on abelian varieties is equivalent to Murre's
conjecture (B). Proposition \ref{indC} shows that Murre's conjecture
(B) for motives of abelian type implies Murre's conjecture (C) for
motives of abelian type.  Finally, that Murre's conjecture (D) for
motives of abelian type implies Murre's conjecture (B) for motives of
abelian type relies on the main theorem of K. Xu and Z.  Xu
\cite{XuXu} which we reproduce as Theorem \ref{XX}. A combination of
Theorems \ref{T:D} and Remark \ref{R:Beauville} is the following.

\begin{theorem2} \label{D} Assume Murre's conjecture (D) for product
  of curves defined over $\Omega$. Then Murre's conjectures (A), (B),
  (C) and (D) hold for all motives of abelian type over $k$. In
  particular, Beauville's vanishing conjecture \cite{Beauville} on
  abelian varieties over $k$ holds.
\end{theorem2}

In particular, as is explained in Section \ref{S:D}, Beauville's
vanishing conjectures on abelian varieties over $k$ reduce to
Beauville's conjecture that the cycle class map $\CH_l(A_\Omega) \r
H^{2\dim A -2l}(A_{\Omega},\Q_\ell)$ be injective when
restricted to $\CH_l^{(2l)}(A_\Omega)$ for all $l$ and for all abelian
varieties $A$ over $k$.

Theorem \ref{D} confirms that conjecture (D) plays a prominent role
among Murre's conjectures. For instance, given a smooth projective
surface $S$ over $k$ with $H^1(S)=0$ and $H^2(S)$ supported on a
divisor, it is well known that Murre's conjecture (D) for $S \times S$
implies that $\CH_0(S)=\Q$.  Here $H^\bullet$ denotes any Weil
cohomology theory, e.g.\ $\ell$-adic cohomology
$H^\bullet(S_{\overline{k}},\Q_\ell)$. In particular, Murre's
conjecture (D) for fourfolds implies Bloch's conjecture on surfaces.
\medskip

\noindent \textbf{0.3.} Let $M$ be a motive over $\Omega$. Given an
adequate equivalence relation ``$\sim$'' on cycles (e.g. $\sim$ could
be trivial, numerical, homological, smash-nilpotent, algebraic, or,
when $k=\C$, Abel-Jacobi equivalence), we denote by $\CH_l(M)_\sim$
the sub-group of $\CH_l(M)$ consisting of cycles that are $\sim 0$.
In \cite{Vial1}, we proved that if $\CH_*(M)_\alg$ is generated by the
Chow groups of zero-cycles on a (possibly non-connected) curve, then
$M$ splits as a direct sum of direct summands of twisted motives of
curves. The following theorem generalizes the main result of
loc.\ cit.\ and gives a criterion for a motive to be of abelian
type.

\begin{theorem2} \label{FD} Let $\sim$ be an adequate equivalence
  relation on cycles which, when restricted to $0$-cycles, is coarser
  than albanese equivalence on $0$-cycles. Let $M$ be a motive over
  $\Omega$. Then $M$ is of abelian type if and only if $\CH_*(M)_\sim$
  is generated, via the action of correspondences, by the Chow groups
  of products of curves.
\end{theorem2}

Theorem \ref{FD} is proved in Subsection \ref{S:FD}. There we actually
state and prove Theorem \ref{factor-gen} which is the main theorem of
Section \ref{S:3} and from which is derived Theorem \ref{FD}.  Here is
an outline of the proof. Let $f:N \r M$ be a morphism of motives. By
Jannsen's semi-simplicity theorem \cite{Jannsen3}, the numerical
motive $\overline{M}$ splits as $\overline{M}_1 \oplus \overline{M}_2$
where $\overline{M}_1$ is the image of $\overline{f}$.  If $N$ is of
abelian type, then, by finite-dimensionality of $N$, this splitting
lifts to a splitting $M_1 \oplus M_2$ of $M$ such that $M_1$
isomorphic to a direct summand of $N$, and such that the induced map
$N \r M_2$ is numerically trivial and such that $\CH_*(N)_\sim \r
\CH_*(M_2)_\sim$ is surjective. Writing $M_2=(X,p,n)$, we then proceed
by induction on the dimension of $X$ to show that $M_2$ is of abelian
type. Lemma \ref{prekey} is the key lemma for that matter. Its proof
relies on a refined version of a theorem of Bloch and Srinivas
\cite{BS} which is expounded in Subsection \ref{sec-stdth}; see
Proposition \ref{BS}.  Note that when $\sim$ is the trivial
equivalence relation, Theorem \ref{FD} can be proved without using the
finite-dimensionality of the motives of curves; see Theorem
\ref{fdsummand2}.\medskip

When $M$ is the motive of a smooth projective variety $X$ over
$\Omega$, Theorem \ref{FD} can be made more precise and one need not
consider the Chow groups of $X$ in all degrees. For instance, we have
the following two theorems.

\begin{theorem2} \label{FD2} Let $\sim$ be as in Theorem \ref{FD}. Let
  $X$ be a smooth projective variety of dimension $2n$ or $2n+1$ over
  $\Omega$. Then the motive of $X$ is of abelian type if and only if
  $\CH_0(X)_\sim, \ldots, \CH_{n-1}(X)_\sim$ are generated, via the
  action of correspondences, by the Chow groups of products of curves.
\end{theorem2}

\begin{theorem2} \label{CK2} Let $\sim$ be as in Theorem \ref{FD}. Let
  $X$ be a smooth projective variety of dimension $2n-1$ or $2n$ over
  $\Omega$. Assume that $\CH_0(X)_\sim, \ldots, \CH_{n-2}(X)_\sim$ are
  generated, via the action of correspondences, by the Chow groups of
  products of curves. Then $X$ has a Chow--K\"unneth decomposition.
 \end{theorem2}

 Some applications of these two theorems are discussed in Subsections
 \ref{S:applications} and \ref{S:smash}. For instance, we consider $X$
 a smooth projective variety rationally dominated by a product of
 curves and we show that if $\dim X \leq 4$, then $X$ has a
 Chow--K\"unneth decomposition; and if $\dim X \leq 3$, then $X$ is
 finite-dimensional in the sense of Kimura. We also show, based on a
 classification result of Demailly--Peternell--Schneider \cite{DPS},
 that a complex fourfold with a nef tangent bundle has a
 Chow--K\"unneth decomposition.\medskip

 An interesting consequence of Theorem \ref{factor-gen} is that, among
 finite-dimensional motives, a motive $M$ is entirely determined, up
 to direct factors isomorphic to Lefschetz motives, by its Chow groups
 of algebraically trivial cycles. Another consequence of Theorem
 \ref{factor-gen} and its proof is the following theorem which is
 concerned with Griffiths groups. We write $\mathrm{Griff}_i(X)$ for
 $\CH_i(X)_\hom / \CH_i(X)_\alg$.

 \begin{theorem2} \label{T:Griff} Let $\sim$ be as in Theorem
   \ref{FD}. Let $f : N \r M$ be a morphism of motives. Assume that $N$
   is finite-dimensional and that there is an integer $l$ such that
   $f_* : \CH_i(N_\Omega)_\sim \r \CH_i(M_\Omega)_\sim $ is surjective
   for all $i<l$. Then $f_* : \mathrm{Griff}_i(N) \r
   \mathrm{Griff}_i(M)$ is surjective for all $i \leq l$. If moreover
   $\sim$ is algebraic equivalence, then $f_* : \CH_i(N_\Omega)_\hom
   \r \CH_i(M_\Omega)_\hom$ is surjective for all $i<l$.\qed
 \end{theorem2}
 
 In the spirit of Theorem \ref{T:Griff}, we are able to extend a
 result of R. Sebastian \cite{sebastian}. We show in Theorem
 \ref{T:smash} that if $M$ is an effective motive over $k$ such that
 $\CH_0(M_\Omega)$ is spanned via the action of correspondences by
 $0$-cycles on products of curves, then numerical equivalence agrees
 with smash-nilpotence equivalence on $1$-cycles on $M$.\medskip

 Finally, Theorem \ref{FD} can be viewed as an analogue modulo
 rational equivalence of the following result which gives yet another
 characterization of motives of abelian type.

\begin{theorem2} \label{T:abhomo} Let $X$ be a smooth projective
  complex variety, the motive of which is finite-dimensional and the
  cohomology of which is spanned, via the action of correspondences,
  by the cohomology of products of curves. Then $X$ is of abelian
  type.
\end{theorem2}

This result is essentially due to Arapura \cite{Arapura}: a
homological motive whose cohomology is spanned by the cohomology of
curves is ``motivated'' by the homological motives of curves. A
standard lifting argument for finite-dimensional motives then proves
Theorem \ref{T:abhomo}. We however include a paragraph to prove this
result as we slightly improve on Arapura's result; see \S \ref{S:hom}.

\vspace{7pt}
\section{Descent and motives of abelian type  \\
  Proof of Theorems \ref{rigidity} and \ref{fdrigidity}}

Let's first consider the following situation. Consider a scheme $X$
over an algebraically closed field $k$ such that $X_K$ is
$K$-isomorphic to an abelian variety $A$ over $K$ for some field
extension $K/k$. The $K$-isomorphism $f : X_K \r A$ is defined over a
subfield of $K$ which is finitely generated over $k$. Therefore, we
may assume that $K$ is finitely generated over $k$ and that there is a
smooth irreducible variety $U$ over $k$ with function field $K$ such
that $A$ spreads to an abelian scheme $\mathscr{A}$ over $U$ and such
that $f$ spreads to a $U$-isomorphism $f_U : X \times_k U \r
\mathscr{A}$.  Specializing at a closed point $u$ of $U$, i.e., pulling
back along the closed immersion $u \r U$, the $U$-isomorphism $f_U$
gives a $k$-isomorphism $f_u : X = X_u \r \mathscr{A}_u$. Thus, $X$ is
isomorphic to an abelian variety.

\subsection{ Proof of Theorem \ref{rigidity}.} \label{S:rigidity} Let
$k$ be an algebraically closed field and let $M$ be a motive over $k$,
say $M = (X,p,n)$. We assume that there is a field $K/k$ such that
$M_K$ is isomorphic over $K$ to a motive of abelian type. We want to
show that $M$
is of abelian type. Since $M$ becomes of abelian type over $K$, it
actually becomes of abelian type over a subfield of $K$ which is
finitely generated over $k$.  We can thus assume that $K$ is finitely
generated over $k$.
Let then $A$ be an abelian variety over $K$ such that we have a
$K$-isomorphism $M_K = (X_K,p_K,n) \cong (A,q,m)$. Up to tensoring
with the Lefschetz motive, which can be thought of as a direct summand
of the motive of an elliptic curve defined over $k$, we may assume
that $n=m=0$.
Let $Y$ be a smooth quasi-projective variety defined over $k$ with
function field $K$. Let then $U$ be a Zariski-open subset of $Y$ such
that $A$ spreads to an abelian scheme $\mathscr{A} \r U$, $q \in
\CH^{\dim A}(A \times_K A)$ spreads to a relative idempotent $\kappa
\in \CH^{\dim A}(\mathscr{A} \times_U \mathscr{A})$, and such that the
$K$-isomorphism $(X_K,p_K) \cong (A,q)$ spreads to an isomorphism
$(X_U,p_U) \cong (\mathscr{A},\kappa)$ of relative motives over $U$.
Here, $(X_U,p_U)$ denotes the constant motive over $U$ whose closed
fibers are $(X,p)$.  A Zariski-open subset of $Y$ that satisfies the
last two properties exists by the localization exact sequence for Chow
groups.

Let $t$ be a closed point of $U$. By assumption, $t$ is a smooth point
and the inclusion $j_t : t \hookrightarrow U$ is thus a regular
embedding.  Therefore, by \cite[\S 6]{Fulton}, there is a Gysin
morphism $j_t^*$ defined on Chow groups which commutes with flat
pull-backs, proper push-forwards and intersection products. It follows
that relative idempotents over $U$ specialize to idempotents and that
the relative isomorphism $(X_U,p_U) \cong (\mathscr{A},\kappa)$
specializes to an isomorphism $(X,p) = (X_t,(j_t, j_t)^*p_U ) \cong
(\mathscr{A}_t, (j_t, j_t)^* \kappa)$ defined over $k$. \qed \medskip

\subsection{Finite-dimensionality of motives is stable under descent.}
\label{S:fdrigidity} Let $k$ be a field and let $K/k$ be a field
extension. In this paragraph, we wish to study the stability under
descent of two notions attached to motives: finite-dimensionality and
Chow--K\"unneth decompositions.

\begin{definition} \label{d:fdck}
 A motive $M$ over $k$ is said to be
  \emph{finite-dimensional} if there exists a splitting $M = M^+
  \oplus M^-$ such that $S^nM^- = \Lambda^nM^+ = 0$ for $n>>0$. A
  motive $M^-$ whose symmetric powers $S^nM^-$ vanish for $n>>0$ is said to be
oddly finite-dimensional and a motive
  $M^+$ whose exterior powers $\Lambda^nM^+$ vanish for $n>>0$ is said to be
evenly finite-dimensional. The notion of finite-dimensionality is due
  independently to Kimura \cite{Kimura} and O'Sullivan.

  The motive $M$ is said to have a \emph{K\"unneth decomposition} if
  there is a finite-sum decomposition of the homological motive
  $M^\hom = \bigoplus_{i\in \Z} (M^\hom)_i$ such that $H_*((M^\hom)_i)
  = H_i(M^\hom)$.

  The motive $M$ is said to have a \emph{Chow--K\"unneth
    decomposition} if there is a finite-sum decomposition $M =
  \bigoplus_{i\in \Z} M_i$ such that this decomposition defines modulo
  homological equivalence a K\"unneth decomposition of $M$.
\end{definition}

First recall the following lemma.

\begin{lemma} \label{bc} Let $X$ be a scheme over $k$. Then the map
  $\CH_*(X) \r \CH_*(X_K)$ induced by base-change is injective.
  If $K/k$ is finite, then the composite with proper push-forward $\CH_*(X) \r
  \CH_*(X_K) \r \CH_*(X)$ is multiplication by $[K:k]$. Moreover, if $K/k$
    is a purely inseparable extension, then  $\CH_*(X)
    \r \CH_*(X_K)$ and $\CH_*(X_K) \r \CH_*(X)$ are
    both isomorphisms.  \qed
\end{lemma}
\begin{proof} This is classical; see for instance \cite[Lemma
  1.A.3]{Bloch}.  When $K/k$ is finite, the proof is immediate by
  definition of flat pull-back and proper push-forward of cycles; see
  \cite[Example 1.7.4]{Fulton}. As for the purely inseparable case,
  consider a cycle $\gamma \in \CH_*(X_K)$ defined over a finite
  purely inseparable extension $K$ of $k$ of degree $p^r$, say. The
  cycle $\frac{1}{p^r}\gamma$ is then defined over $k$. Thus, $\gamma$
  is the image of the cycle $p^r \cdot (\frac{1}{p^r}\gamma)$ under
  the map $\CH_*(X) \r \CH_*(X_K)$.
\end{proof}

Let us mention the basic fact that a motive that becomes zero after
base-change is zero.

\begin{proposition} \label{P:rigtrivial}
  Let $M$ be a motive over $k$. Then $M = 0$ if and only if $M_K = 0$.
\end{proposition}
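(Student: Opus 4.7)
The forward direction is immediate, so the plan is to prove that if $M_K = 0$ then $M = 0$. The idea is that a motive in the pseudo-abelian category of Chow motives is zero precisely when its identity endomorphism vanishes, and identity endomorphisms are cycle classes, to which the injectivity result of Lemma \ref{bc} applies directly.

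More concretely, I would write $M = (X,p,n)$ with $X$ smooth projective over $k$ and $p \in \CH_{\dim X}(X\times X)$ an idempotent. Then $M$ is the zero motive if and only if $\id_M = 0$, and $\id_M$ is represented by the class $p$ itself. Base-changing to $K$, the hypothesis $M_K = 0$ says exactly that $p_K = 0$ in $\CH_{\dim X}((X\times X)_K) = \CH_{\dim X}(X_K \times X_K)$.

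Now apply Lemma \ref{bc} to the smooth projective $k$-scheme $X \times X$: the base-change map $\CH_*(X\times X) \r \CH_*(X_K \times X_K)$ is injective. Hence $p = 0$ in $\CH_{\dim X}(X\times X)$, so $\id_M = 0$ and $M = 0$.

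The step that does all the work is the injectivity of base-change for Chow groups, which is already established in Lemma \ref{bc} via a reduction to finite extensions (where a trace/norm argument gives multiplication by the degree) together with a specialization argument in the purely transcendental case; there is therefore no genuine obstacle in the proof of this proposition, which is essentially a formal translation of Lemma \ref{bc} from cycle classes to motives.
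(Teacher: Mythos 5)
Your proof is correct and takes exactly the same route as the paper: reduce to the statement that $(X,p,n)=0$ iff $p=0$ in $\CH_*(X\times X)$, then invoke the injectivity of base-change from Lemma \ref{bc}. You simply spell out the identification $\id_M = p$ more explicitly than the paper does.
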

\begin{proof}
  By definition, a motive $(X,p,n)$ is zero if and only if $p =0 \in
  \CH_*(X \times X)$. The proposition then follows from Lemma \ref{bc}.
\end{proof}

\begin{theorem} \label{fd-rigidity} Let $M$ be a motive over $k$. Then
  $M$ is finite-dimensional if and only if $M_K$ is
  finite-dimensional.
\end{theorem}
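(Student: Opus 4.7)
The direct implication is immediate: if $M = M^+ \oplus M^-$ witnesses the finite-dimensionality of $M$, then the base change $M_K = M^+_K \oplus M^-_K$ witnesses that of $M_K$, since $\Lambda^n$ and $S^n$ commute with base change. For the converse, my plan is to reduce the descent problem to three tractable cases. Since the data witnessing finite-dimensionality of $M_K$ — namely the idempotents $p^\pm_K \in \End(M_K)$ together with the vanishings $\Lambda^n(p^+_K M_K) = 0 = S^n(p^-_K M_K)$ for $n\gg 0$ — consist of finitely many cycles on varieties over $K$, they are defined over a finitely generated subextension, so I may assume $K/k$ is finitely generated. Writing $K/k$ as a tower of simple extensions and inducting, I reduce to (i) $K/k$ purely inseparable, (ii) $K = k(t)$ purely transcendental of degree one, or (iii) $K/k$ finite separable.

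Case (i) is immediate from Lemma \ref{insep}, which yields isomorphisms $\End(M^{\otimes n}) \xrightarrow{\sim} \End(M_K^{\otimes n})$ for every $n$, so that all idempotents and vanishings transfer verbatim. For case (ii), spread $p^\pm_K$ to relative idempotents on a dense open $U \subset \A^1_k$ and, invoking the localization sequence for Chow groups, shrink $U$ so that $\Lambda^n(p^+_U M_U) = 0$ and $S^n(p^-_U M_U) = 0$ as cycles over $U$. Specializing at any closed point $u \in U$ via the Gysin pullback associated to the regular embedding $u \hookrightarrow U$ then yields a decomposition witnessing finite-dimensionality of $M_{k(u)}$; if $u$ is $k$-rational we are done, otherwise $k(u)/k$ is a finite extension, which reduces to cases (i) and (iii).

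The remaining and principal difficulty is case (iii), which — after replacing $K$ by its Galois closure — amounts to assuming $K = L$ with $L/k$ finite Galois of group $G$. The Kimura idempotents $p^\pm_L \in \End(M_L)$ are not a priori $G$-invariant; producing a $G$-invariant idempotent realizing the even/odd decomposition is the main obstacle. I would proceed in two steps. First, in the semisimple abelian category of numerical motives over $L$ (Jannsen), the object $\overline{M_L}$ admits a canonical maximal even subobject (the sum of all simple subobjects $S$ with $\Lambda^n S = 0$ for some $n$) and dually an odd one, realizing a decomposition $\overline{M_L} = \overline{M_L}^+ \oplus \overline{M_L}^-$. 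Its canonicity forces the projector $\bar p^+_L \in \End(\overline{M_L})$ to be $G$-invariant. Second, the finite-dimensionality of $M_L$ together with Kimura's theorem guarantee that the ideal $I_L := \ker(\End(M_L) \to \End(\overline{M_L}))$ is nilpotent. The Galois average $q^+ := \frac{1}{|G|}\sum_{g\in G} g(p^+_L) \in \End(M_L)^G$ then reduces to $\bar p^+_L$ modulo $I_L$ (each $g(p^+_L)$ does, by $G$-invariance of $\bar p^+_L$), so $(q^+)^2 \equiv q^+ \pmod{I_L}$. The standard lifting of idempotents modulo a nilpotent ideal in a $\Q$-algebra — expressing the lift as a polynomial in $q^+$ with rational coefficients — yields a $G$-invariant honest idempotent $e \in \End(M_L)$ with $e \equiv q^+ \pmod{I_L}$, which by Galois descent for Chow groups with rational coefficients (Lemma \ref{bc}) descends to an idempotent $p^+ \in \End(M)$.

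Set $M^+ := p^+ M$ and $M^- := (\id - p^+)M$. Since $e$ and $p^+_L$ are idempotents in $\End(M_L)$ congruent modulo the nilpotent ideal $I_L$, they are conjugate, so $M^+_L \cong p^+_L M_L$ is evenly finite-dimensional and $M^-_L$ oddly finite-dimensional. Consequently $(\Lambda^n M^+)_L = 0$ and $(S^n M^-)_L = 0$ for $n \gg 0$, and Proposition \ref{P:rigtrivial} applied to $L/k$ forces $\Lambda^n M^+ = 0 = S^n M^-$. Thus $M$ is finite-dimensional.
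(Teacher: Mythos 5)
Your proof is correct and follows the same overall strategy as the paper: the easy base-change direction; reduction to $K/k$ finitely generated, then to finite, then (via Lemma \ref{insep}) to finite Galois with group $G$; Galois-averaging the even projector; lifting the average to a $G$-invariant idempotent modulo a nilpotent ideal; descending that idempotent to $k$; and finally invoking Proposition \ref{P:rigtrivial} to see that the descended summands really are evenly and oddly finite-dimensional. Two places where your implementation diverges from the paper's, both harmlessly: (1) you reduce from finitely generated to finite by an explicit tower of simple extensions (purely transcendental steps, then finite separable, then purely inseparable), whereas the paper simply spreads the whole datum over a $k$-variety $U$ and specializes at a closed point, landing directly on a finite extension — your route is slightly longer but equivalent; (2) in the Galois step, the paper works modulo \emph{smash-nilpotence}: it observes $p^+$ is central in $\End(M_K^{\hom})$, computes $\tilde p^+ \circ \tilde p^+ = \tilde p^+ + n$ with $n$ smash-nilpotent using Kimura's Propositions 2.16 and 6.1, and then runs Beilinson's explicit descending induction to correct $\tilde p^+$ to an idempotent. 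You instead work modulo \emph{numerical equivalence}: you use Jannsen's semisimplicity to say the even/odd decomposition of $\overline{M_L}$ is canonical (hence $G$-invariant), deduce $(q^+)^2 \equiv q^+ \pmod{I_L}$ with $I_L$ nilpotent by Kimura's Proposition 7.5, and lift via a polynomial in $q^+$, which automatically preserves $G$-invariance. Your version is arguably cleaner: it trades the paper's explicit smash-nilpotence computation and Beilinson iteration for two standard black boxes (Jannsen semisimplicity plus nilpotence of the numerical kernel plus commutative idempotent lifting), and the canonicity observation replaces the paper's centrality-in-$\End(M_K^{\hom})$ remark with an equivalent but more structural statement. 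The only point worth flagging is implicit in your write-up: the canonicity of the even/odd splitting of $\overline{M_L}$ — that is, the vanishing of $\Hom$ in the numerical category between even and odd pieces so that every simple subobject is unambiguously even or odd — ultimately rests on Kimura's Proposition 6.1 (even-to-odd morphisms are smash-nilpotent, hence numerically trivial), so that ingredient hasn't really disappeared, it has just been folded into the semisimplicity-plus-canonicity framing.
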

\begin{proof}
  If $M$ is finite-dimensional then it is clear that $M_K$ is
  finite-dimensional. Indeed, if $M$ splits as $M^+ \oplus M^-$ with
  $S^nM^- = \Lambda^nM^+ = 0$ for some $n$, then we have that $M_K$
  splits as $(M^+)_K \oplus (M^-)_K$ with $S^n(M^-)_K = \Lambda^n(M^+)_K =
  0$ in view of Proposition \ref{P:rigtrivial} and the fact that symmetric
powers and exterior powers of motives
      commute with base-change,. 

  Assume now that $M_K$ is finite-dimensional. This means that $M_K$
  has a splitting $(M_K)^+ \oplus (M_K)^-$ with $S^n(M_K)^- =
  \Lambda^n(M_K)^+ = 0$ for some $n>>0$. Such a splitting is defined
  over a finitely generated field over $k$ and we may assume that $K$
  is finitely generated over $k$. By a specialization argument as in
  the proof of Theorem \ref{rigidity}, we may even assume that $K$ is
  a finite extension of $k$. By Lemma \ref{bc}, we may further
  assume that $K$ is a finite Galois extension of $k$ with Galois
  group $G$, say.  Let then $p_K : = \id_{M_K} = p^+ + p^- \in
  \End(M_K)$ be the decomposition corresponding to the decomposition
  $M_K = (M_K)^+ \oplus (M_K)^-$.  The group $G$ acts on $\End(M_K)$
  as follows : for all $g \in G$ and all $f \in \End(M_K)$ we have
  $g\cdot f:= g \circ f \circ g^{-1}$.  This is well-defined since
  $p_K$ is defined over $k$ (so that $p_K \circ g = g \circ p_K$).
  Consider the $G$-invariant correspondence
  $$\tilde{p}^+ := \frac{1}{|G|} \sum_{g \in G} g\cdot p^+ \in
  \End(M_K).$$ The correspondence $p^+$ defines in $\End(M_K^\hom)$
  the projector on the even-degree homology of $M_K$ and since $M_K$
  is defined over $k$, the $\ell$-adic cohomology class of $p^+$ is
  clearly invariant under the Galois group of $k$. This yields that
  $\tilde{p}^+$ and $p^+$ are homologically equivalent.  We can thus 
  write
  $$ \tilde{p}^+ \circ \tilde{p}^+ = \tilde{p}^+ + n$$ for some
  correspondence $n \in \End(M_K)$ that is homologically trivial, and
  hence nilpotent by finite-dimensionality of $M_K$ \cite[Prop. 7.5]{Kimura}.
Since
  $\tilde{p}^+$ is $G$-invariant, it follows that $ \tilde{p}^+ \circ
  \tilde{p}^+$ is $G$-invariant and hence that $n$ is
  $G$-invariant. Looking at $\tilde{p}^+ \circ \tilde{p}^+ \circ
  \tilde{p}^+$, we see that $n \circ \tilde{p}^+ = \tilde{p}^+ \circ
  n$. Following Beilinson, we compute $$\big(\tilde{p}^+ +
  (1-2\tilde{p}^+) \circ n \big)^{ \circ 2} = \tilde{p}^+ +
  (1-2\tilde{p}^+) \circ n + n^{ \circ 2} \circ (4n-3).$$ A
  straightforward descending induction on the nilpotence index of $n$
  shows that there is a homologically trivial correspondence $m$ such
  that $q^+ := \tilde{p}^+ + m$ is a $G$-invariant idempotent in
  $\End(M_K)$.  Thus $p^+$ is homologically equivalent to the
  $G$-invariant idempotent $q^+$.  Likewise, $p^-$ is homologically
  equivalent to the $G$-invariant idempotent $q^- := \id_{M_K} -
  q^+$. By finite-dimensionality of $M_K$, it follows that $\im q^+$
  and $\im q^-$, which are motives defined over $k$, are respectively
  isomorphic over $K$ to $(M_K)^+$ and $(M_K)^-$. In particular, we
  obtain that $S^n(\im q^-) = \Lambda^n(\im q^+) = 0$ over $K$ for
  $n>>0$. By Proposition \ref{P:rigtrivial}, we conclude that $\im q^+
  \oplus \im q^-$ is a decomposition of $M$ into an evenly
  finite-dimensional motive and an oddly finite-dimensional motive.
\end{proof}

\begin{proposition}
  Let $M$ be a motive over $k$. Then $M_{\overline{k}}$ has a
  Chow--K\"unneth decomposition if and only if there exists a field
  extension $K/k$ such that $M_K$ has a Chow--K\"unneth decomposition.
\end{proposition}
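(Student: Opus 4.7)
The plan is to adapt the spreading-and-specialization strategy used in the proofs of Theorems \ref{rigidity} and \ref{fd-rigidity}. One direction, namely that a Chow--K\"unneth decomposition of $M_{\overline k}$ base-changes to one of $M_K$, is immediate: by the injectivity statement of Lemma \ref{bc}, a collection of mutually orthogonal idempotents $\pi_i \in \End(M_{\overline k})$ summing to $\id_M$ pulls back to such a collection in $\End(M_K)$, and the K\"unneth condition is preserved because the cycle class map is compatible with base-change and because $\ell$-adic cohomology is invariant under extensions of algebraically closed fields.

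For the converse, I would first reduce to the case where $K$ is finitely generated over $\overline k$: the idempotents $\pi_i^K$ of a given Chow--K\"unneth decomposition of $M_K = (X_K, p_K, n)$ are all defined over some subextension $K_0/\overline k$ of $K$ that is finitely generated, they still form an orthogonal decomposition of $p_{K_0}$ over $K_0$ by Lemma \ref{bc}, and the K\"unneth condition at $K_0$ follows from that at $K$ by invariance of $\ell$-adic cohomology. Replacing $K$ by $K_0$, let $U$ be a smooth irreducible $\overline k$-variety with function field $K$. Using the localization sequence for Chow groups, after shrinking $U$ one can spread the $\pi_i^K$ to cycles $\widetilde \pi_i \in \CH_*(X \times X \times U)$ satisfying $\widetilde \pi_i \circ \widetilde \pi_j = \delta_{ij}\widetilde \pi_i$ and $\sum_i \widetilde \pi_i = \widetilde p$ already over $U$, where $\widetilde p$ spreads $p_K$. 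Since $\overline k$ is algebraically closed and $U$ is smooth, any closed point $u \in U$ gives a regular embedding with residue field $\overline k$, and specializing via the associated Gysin map yields orthogonal idempotents $\pi_i \in \End(M_{\overline k})$ summing to $p$.

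The main obstacle will be to verify the K\"unneth condition for these specialized idempotents, since this is precisely what distinguishes a Chow--K\"unneth decomposition from a mere orthogonal decomposition of the identity. Fix a Weil cohomology, say $\ell$-adic. Because the family $(X \times X) \times U \to U$ is a constant family, its $\ell$-adic higher direct images are constant local systems on $U$, so the cohomology class $[\widetilde \pi_i]$ restricts to the same class on every fiber of $U$; by the compatibility of the cycle class map with Gysin specialization, this common class equals $[\pi_i]$ in $H^*((X\times X)_{\overline k},\Q_\ell)$. Under the canonical isomorphism with $H^*((X\times X)_K,\Q_\ell)$ given by invariance of $\ell$-adic cohomology under algebraically closed extensions, this class corresponds to $[\pi_i^K]$, which by hypothesis is the K\"unneth projector onto $H_i$. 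Hence each $\pi_i$ is K\"unneth modulo homological equivalence, and $\bigoplus_i \im(\pi_i)$ provides the sought Chow--K\"unneth decomposition of $M_{\overline k}$.
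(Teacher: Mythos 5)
Your proposal is correct and takes exactly the route the paper does: the paper's proof of this proposition is a two-sentence pointer to the spreading-and-specialization argument from Theorem \ref{rigidity}, combined with the compatibility of specialization with the cycle class map from \cite[\S 20.3]{Fulton}, and your write-up is precisely a spelled-out version of those two ingredients (plus the invariance of $\ell$-adic cohomology under extensions of algebraically closed fields, which the paper uses implicitly and makes explicit in the neighboring Proposition \ref{Kuenneth}).
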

\begin{proof}
  The ``only if'' part of the proposition is obvious and the ``if''
  part follows from a specialization argument as in the proof of
  Theorem \ref{rigidity} together with the compatibility of
  specialization with the cycle class map \cite[\S 20.3]{Fulton}.
\end{proof}

\begin{question} \label{Q:CK} Let $M$ be a motive over $k$. Assume
  that $M_K$ has a Chow--K\"unneth decomposition. Then does $M$ have a
  Chow--K\"unneth decomposition?
\end{question}

This question has a positive answer modulo homological equivalence, as
is observed in the following  proposition.

\begin{proposition} \label{Kuenneth} Let $M$ be a motive over $k$.
  Then $M^\hom$ has a K\"unneth decomposition if and only if $M_K$ has
  a K\"unneth decomposition.
\end{proposition}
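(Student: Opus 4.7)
The plan is to handle the two directions separately; the nontrivial one proceeds by Galois descent as in Theorem \ref{fd-rigidity}, but is in fact simpler because we work modulo homological equivalence, so no smash-nilpotent perturbation is required. The ``only if'' direction is immediate: a K\"unneth decomposition $M^\hom = \bigoplus_i (M^\hom)_i$ base-changes to a decomposition of $M_K^\hom$, and the compatibility of the cycle class map and of the cohomological grading with base-change ensures that $((M^\hom)_i)_K$ has cohomology concentrated in degree $i$ of $M_K$.

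For the converse, starting from orthogonal idempotents $\tilde{\pi}_i \in \End(M_K^\hom)$ projecting onto $H_i(M_K)$, we first reduce to $K/k$ finite Galois. The $\tilde{\pi}_i$ are defined over a finitely generated subextension; the specialization argument from the proof of Theorem \ref{rigidity}, together with the compatibility of specialization with the cycle class map, reduces to $K/k$ finite; Lemma \ref{insep} reduces to $K/k$ separable; and enlarging $K$ to its Galois closure --- which only strengthens the hypothesis --- reduces to $K/k$ finite Galois, with group $G$.

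The key observation is then that $\tilde{\pi}_i$ is \emph{automatically} $G$-invariant in $\End(M_K^\hom)$: since we work modulo homological equivalence, $\tilde{\pi}_i$ is determined by its cohomology class, and this class --- the projector onto the $i$-th graded piece of $H^*(M_K)$ --- is Galois-invariant because the cohomological grading is intrinsic to $M$ and hence stable under the $G$-action. We then appeal to the isomorphism $\End(M^\hom) \cong \End(M_K^\hom)^G$, valid with $\Q$-coefficients: injectivity is Lemma \ref{bc} together with its cohomological analogue (valid via transfer for finite extensions), and surjectivity follows from the identity $p^* p_* = \sum_{g \in G} g_*$ for the base-change map $p : (X \times X)_K \r X \times X$, giving $\alpha = \tfrac{1}{|G|} p^* p_*(\alpha)$ for $G$-invariant $\alpha$. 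This produces $\pi_i \in \End(M^\hom)$ base-changing to $\tilde{\pi}_i$; the relations $\pi_i \circ \pi_j = \delta_{ij} \pi_i$ and $\sum_i \pi_i = \id_{M^\hom}$ then descend by injectivity of base-change modulo homological equivalence, and the cohomology class of $\pi_i$ is the $i$-th K\"unneth projector of $M$ since its base-change is that of $M_K$. The only real work is setting up the Galois descent cleanly at the level of Chow modulo homological equivalence; everything else is formal once the $G$-invariance of $\tilde{\pi}_i$ is observed.
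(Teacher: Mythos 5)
Your proof is correct and follows essentially the same Galois-descent strategy as the paper: reduce to a finite Galois extension, observe that the K\"unneth projectors in $\End(M_K^\hom)$ are $G$-invariant, and descend via the trace. The paper reaches the finite extension more directly by invoking the invariance of Chow groups modulo homological equivalence under extensions of algebraically closed fields rather than by spreading out and specializing, and it justifies $G$-invariance via centrality of the K\"unneth projectors rather than via the intrinsic nature of the cohomological grading, but these are cosmetic differences.
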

\begin{proof}
  The ``only if'' part of the proposition is obvious. Recall that, for
  any extension $F'/F$ of algebraically closed fields and for any
  smooth projective variety $X$ over $F$, the base-change map
  $\CH_*(X) \r \CH_*(X_{F'})$ is an isomorphism modulo homological
  equivalence.  If $M_K$ has a K\"unneth decomposition, then it
  follows by specialization that, for some finite extension $l/k$,
  $M_l$ has a K\"unneth decomposition. By Lemma \ref{bc}, we may
  assume that $l/k$ is Galois. Let us write $M=(X,p,n)$ with $d=\dim
  X$. Since $M_l$ is defined over $k$, the K\"unneth projectors are
  invariant in $\End(M_l^\hom) \subseteq H^{2d}_{et}(X_{\bar{k}}
  \times_{\bar{k}} X_{\bar{k}}, \Q_\ell(d))$ under the action of the
  Galois group of $k$. It follows that $\Gal (l/k)$ acts trivially on
  those. Thus, the K\"unneth decomposition of $M_l$ is defined over
  $k$ and hence defines a K\"unneth decomposition of $M$.
\end{proof}

The following theorem shows that, assuming finite-dimensionality for
$M$, Question \ref{Q:CK} has a positive answer.

\begin{theorem} \label{t:fdck}
  Let $M$ be a motive over $k$. If $M_K$ is finite-dimensional and has
  a K\"unneth decomposition, then $M$ is finite-dimensional and has a
  Chow--K\"unneth decomposition.
\end{theorem}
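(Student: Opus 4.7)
The plan is to extract both conclusions from the two results already established in this section. By Theorem \ref{fd-rigidity}, the hypothesis that $M_K$ is finite-dimensional already forces $M$ itself to be finite-dimensional, so the first assertion is immediate. Similarly, by Proposition \ref{Kuenneth}, the hypothesis that $M_K$ admits a K\"unneth decomposition yields a K\"unneth decomposition of $M^{\hom}$ defined over $k$ --- that is, a finite system of mutually orthogonal idempotents $\pi_i^{\hom} \in \End(M^{\hom})$ which sum to the identity and induce the K\"unneth decomposition on homology. The entire remaining substance of the theorem is therefore to lift this decomposition modulo homological equivalence to an honest Chow--K\"unneth decomposition of $M$.

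The key observation I would use is that, because $M$ is finite-dimensional, the kernel $I$ of the reduction map $\End(M) \twoheadrightarrow \End(M^{\hom})$ is a nilpotent two-sided ideal of $\End(M)$. Indeed, Kimura's nilpotence theorem \cite[Proposition 7.5]{Kimura}, recalled in the introduction, asserts that the kernel of the further reduction $\End(M) \to \End(\overline{M})$ modulo numerical equivalence is a nilpotent ideal; since homological equivalence is finer than numerical equivalence, $I$ is contained in this larger nilpotent ideal, and is therefore itself nilpotent.

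With this in hand, the remainder of the argument is the standard lifting lemma for orthogonal idempotents modulo a nilpotent two-sided ideal: given a finite orthogonal system of idempotents in a ring $R$ modulo a nilpotent ideal summing to $1$, one can lift it to an orthogonal system of idempotents in $R$ summing to $1$. One proves this inductively: lift the first projector to an idempotent via any of the usual closed-form formulas (e.g.\ the Newton-type iteration used by the authors in the proof of Theorem \ref{fd-rigidity}, which terminates because the ideal is nilpotent), then apply the same procedure inside the corner ring cut out by the complementary idempotent to deal with the remaining ones. Applying this to the $\pi_i^{\hom}$ produces orthogonal idempotents $\pi_i \in \End(M)$ summing to $\id_M$ whose reductions modulo homological equivalence are the $\pi_i^{\hom}$; by construction the resulting decomposition $M = \bigoplus_i (M, \pi_i)$ is a Chow--K\"unneth decomposition of $M$.

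I do not foresee a genuine obstacle: both the nilpotence of $I$ and the idempotent-lifting lemma are off-the-shelf, and the only real geometric input is the finite-dimensionality of $M$, which is already delivered by Theorem \ref{fd-rigidity}. If a subtlety arises, it will be in verifying carefully that nothing in the lifting procedure requires passing to $K$ or to the algebraic closure --- but this is clear, since the whole argument takes place in the $k$-linear ring $\End(M)$ and its nilpotent ideal $I$.
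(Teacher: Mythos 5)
Your proposal is correct and follows the same route as the paper: apply Theorem \ref{fd-rigidity} for finite-dimensionality, Proposition \ref{Kuenneth} for the K\"unneth decomposition of $M^{\hom}$, and then lift to a Chow--K\"unneth decomposition. The paper compresses the final lifting step into a single ``It follows that\ldots''; your account correctly unpacks it (nilpotence of $\ker(\End(M)\to\End(M^{\hom}))$ via Kimura's theorem, then the standard orthogonal idempotent lifting lemma), which is exactly the intended argument.
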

\begin{proof}
 By  Theorem \ref{fd-rigidity}, $M$ is finite-dimensional; and by \cite[Prop.
7.5]{Kimura}, it follows that the kernel of $\operatorname{End}(M) \r
\operatorname{End}(M^{\hom})$ is nilpotent. Therefore, by \cite[Lemma
5.4]{Jannsen}, a sum of idempotents in $\operatorname{End}(M^{\hom})$ that adds
to the identity lifts to a sum of idempotents in $\operatorname{End}(M)$ that
adds to the identity in $\operatorname{End}(M)$. 
 Now $M$ has a K\"unneth decomposition by Proposition \ref{Kuenneth}. We thus
find
  that this K\"unneth decomposition lifts to a Chow--K\"unneth decomposition for
$M$.
\end{proof}

For instance, if $M$ is a motive such that $M_{\overline{k}}$ is of
abelian type, then $M$ has a Chow--K\"unneth decomposition. In
particular, we obtain the following result that extends the classical
result of Deninger--Murre \cite{DM} according to which every abelian
variety has a Chow--K\"unneth decomposition.

\begin{corollary} \label{C:potab} Let $X$ be a variety over $k$ such
  that $X_{\overline{k}}$ has the structure of an abelian variety.
  Then $X$ has a Chow--K\"unneth decomposition. \qed
\end{corollary}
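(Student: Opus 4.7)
The plan is simply to invoke the theorem immediately preceding the corollary, taking $K = \overline{k}$. First I would verify that $X$ is smooth and projective over $k$: since $X_{\overline{k}}$ carries the structure of an abelian variety, it is smooth and projective over $\overline{k}$, and both properties descend under the faithfully flat base-change $\overline{k}/k$. Consequently, $\h(X)$ is a well-defined Chow motive over $k$.

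Next, by the classical theorem of Deninger--Murre \cite{DM}, the abelian variety $X_{\overline{k}}$ admits a Chow--K\"unneth decomposition, and in particular its motive $\h(X_{\overline{k}}) = \h(X)_{\overline{k}}$ has a K\"unneth decomposition. Since motives of abelian varieties are of abelian type, $\h(X)_{\overline{k}}$ is moreover finite-dimensional.

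Applying the preceding theorem with $M = \h(X)$ and $K = \overline{k}$ then yields at once that $\h(X)$ is finite-dimensional and admits a Chow--K\"unneth decomposition. There is no real obstacle here: the argument reduces to plugging in hypotheses to the machinery already developed, the two non-trivial ingredients being Theorem \ref{fd-rigidity} (descent of finite-dimensionality) and Proposition \ref{Kuenneth} (descent of K\"unneth decompositions), whose combination is precisely what the preceding theorem packages. The only point worth double-checking, as observed in the remark just before the corollary, is that the hypothesis on $X_{\overline{k}}$ actually supplies both descent inputs, which is indeed the case via Deninger--Murre.
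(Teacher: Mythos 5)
Your proof is correct and matches the paper's intended argument exactly: the paragraph immediately preceding the corollary spells out precisely this reduction, namely that $M_{\overline{k}}$ being of abelian type gives both finite-dimensionality and a K\"unneth decomposition (here via Deninger--Murre), and the preceding theorem then does the rest. The only addition you make beyond the paper's sketch is the preliminary remark that smoothness and projectivity of $X$ descend from $\overline{k}$ to $k$ so that $\h(X)$ is well-defined, which the paper leaves implicit.
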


\vspace{7pt}
\section{Murre's conjectures and motives of abelian type \\
  Proof of Theorem \ref{D}} \label{S:D}

Murre's conjectures \cite{Murre1} were originally stated for smooth
projective varieties. Here, we give a statement for motives which
contains the original statement of Murre for smooth projective
varieties.

\begin{conjecture}[Murre \cite{Murre1}] \label{Murreconj} Let $M$ be a
  motive defined over a field $k$.\medskip

(A) $M$ has a Chow--K\"unneth decomposition : $M$ splits as a finite
direct sum $\bigoplus_{i \in \Z} M_i$, where $H_*(M_i) = H_i(M)$ for
all $i$.

(B) $\CH_l(M_i) = 0$ for $i<2l$.

(C) $F^\nu \CH_l(M) := \bigoplus_{i \geq 2l+\nu} \CH_l(M_i)$ does not
depend on the choice of a Chow--K\"unneth decomposition.

(D) $\CH_l(M_{2l})_\hom = 0$ for all $l$.
\end{conjecture}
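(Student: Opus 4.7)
My plan addresses (A)--(D) sequentially, isolating at each step the conjectural inputs it rests on, and identifying (D) as the essential obstacle.

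For (A) I would first invoke the standard conjecture of K\"unneth type to produce algebraic K\"unneth projectors, giving a K\"unneth decomposition of $M^\hom$, and then lift it to Chow assuming $M$ is finite-dimensional in the sense of Kimura--O'Sullivan. By Kimura's nilpotence theorem \cite[Proposition 7.5]{Kimura}, the kernel of $\End(M) \to \End(\overline{M})$ is a nilpotent ideal. Combined with Jannsen's semi-simplicity \cite{Jannsen3} of the numerical category (which, modulo the standard conjecture $D$ identifying homological and numerical equivalence, applies to $\End(M^\hom)$), the orthogonal K\"unneth idempotents cutting out the components of $M^\hom$ can be lifted through this nilpotent ideal to pairwise orthogonal idempotents in $\End(M)$. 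For $M$ of abelian type both conjectural inputs are unconditional, recovering the Deninger--Murre theorem \cite{DM}; in general (A) becomes a consequence of (standard conjecture C) and Kimura's conjecture.

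I would then derive (C) from (A) together with finite-dimensionality alone. Given two Chow--K\"unneth decompositions $\{p_i\}$ and $\{p_i'\}$, the differences $p_i - p_i'$ lie in the nilpotent ideal $\ker(\End(M) \to \End(M^\hom))$, so they act nilpotently on each $\CH_l(M)$; a short induction on the nilpotence index, combined with the expected shift behaviour of such nilpotent self-correspondences relative to the candidate filtration $F^\nu \CH_l(M) = \bigoplus_{i \geq 2l+\nu} \CH_l(M_i)$, shows the filtration is independent of the choice. For (B), the plan is to deduce it from (D), following the strategy of Xu--Xu: a refined Bloch--Srinivas diagonal decomposition (Theorem \ref{BS}) writes the diagonal of $M_i$ in a form whose action on $\CH_l$ is controlled by homologically trivial cycles on varieties of small dimension, which then vanish by (D). The paper's Propositions \ref{indB} and Theorem \ref{D} formalise this in the abelian-type setting; the same machinery should work in general granting finite-dimensionality.

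The main obstacle is (D). The vanishing $\CH_l(M_{2l})_\hom = 0$ is precisely the ``injectivity'' assertion of the conjectural Bloch--Beilinson filtration, and no unconditional strategy is available. The realistic plan, carried out in the rest of the paper for motives of abelian type, is a reduction strategy: Theorem \ref{FD} and its refinements reduce an arbitrary motive of abelian type to a direct summand of a product of curves, and Theorem \ref{D} reduces (D) for such motives to (D) for products of curves over a universal domain. Beyond the abelian-type case, the only available tools are low-dimensional instances (Bloch's conjecture for surfaces, Beauville's vanishing conjecture for abelian varieties), so a general proof of (D) -- and therefore of Murre's conjectures in their full generality -- would require ingredients genuinely beyond finite-dimensionality and the Bloch--Srinivas method.
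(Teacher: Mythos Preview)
The statement is a \emph{conjecture}, not a theorem, and the paper does not prove it; it only establishes conditional and partial results (Propositions~\ref{indB} and~\ref{indC}, Theorems~\ref{XX} and~\ref{T:D}). Your proposal is therefore not a proof but a strategy outline, and should be read as such. Most of it tracks the paper's own logic: (A) from the K\"unneth standard conjecture plus finite-dimensionality is standard; (B) from (D) via the Xu--Xu induction (Theorem~\ref{XX}) is exactly the route the paper takes for products of curves; and your identification of (D) as the genuine obstruction is accurate and matches the discussion around Theorem~\ref{Beauville}.

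There is, however, a real gap in your treatment of (C). You claim (C) follows from (A) and finite-dimensionality alone, arguing that the differences $p_i - p_i'$ are nilpotent and that ``the expected shift behaviour'' of nilpotent self-correspondences then gives independence of the filtration. But nilpotence of $p_i - p_i'$ does not by itself force the two filtrations to coincide as subspaces of $\CH_l(M)$; as Remark~\ref{remarkC} stresses, one needs equality of subspaces, not mere isomorphism of graded pieces (which is all finite-dimensionality buys you via Proposition~\ref{indB}). The ``expected shift behaviour'' you invoke is precisely the vanishing $\Hom(M_i, M_j') = 0$ for $i > j$, and the paper obtains this in Proposition~\ref{indC} only by \emph{assuming} (B) for $M \otimes M^\vee$. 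So your derivation of (C) implicitly smuggles in (B), and the claim that finite-dimensionality alone suffices is not justified.
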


There are several remarks to be made about the formulation of Murre's
conjectures given above; see Remarks \ref{strongB}, \ref{remarkC} and
\ref{remarkD}. On the behavior of those conjectures with respect to
field extensions, we refer to Propositions \ref{basechange} and
\ref{basechange}. On the independence of conjectures (B) and (D) with
respect to the choice of a Chow--K\"unneth decomposition as in (A), we
refer to Proposition \ref{indB} for a partial answer. Finally, on
links between conjectures (B) and (C), we refer to Proposition
\ref{indC}.

\begin{remark}[On conjecture (B)] \label{strongB} Usually, conjecture
  (B) for varieties is stated in a stronger form which takes the
  following form for motives: if $M=(X,p,n)$ has a Chow--K\"unneth
  decomposition $M=\bigoplus_{i \in \Z} M_i$, then $\CH_l(M_i) = 0$
  for $i<2l$ and for $i > l -n + \dim X$. However, a combination of
  Murre's conjectures with the Lefschetz standard conjecture for $M$
  implies the strong form of conjecture (B). In particular, if
  $M=(X,p,n)$ is of abelian type, then it is known that $\CH_l(M_i) =
  0$ for $i > l -n + \dim X$.  Also, the formulation given in
  Conjecture \ref{Murreconj} has the advantage of not involving a
  variety $X$ and an integer $n$ such that $M = (X,p,n)$.
\end{remark}

\begin{remark} [On conjecture (C)] \label{remarkC} Let $G$ be the
  filtration on $\CH_l(M)$ induced by a Chow--K\"unneth decomposition
  $M= \bigoplus M'_i$. In (C), $F^\nu \CH_l(M)$ and $G^\nu \CH_l(M)$
  are meant to coincide as sub-vector spaces of $\CH_l(M)$ (not to be
  merely isomorphic, as would be the case, for instance, were $M$
  finite-dimensional).
\end{remark}

\begin{remark} [On conjecture (D)] \label{remarkD} First, for a motive
  $N$, the notation $\CH_l(N)_\hom$ is unambiguous: if $N = (Y,q,n)$,
  then $q_*\big(\CH_{l-n}(Y)_\hom \big) =
  \big(q_*\CH_{l-n}(Y)\big)_\hom$.  Indeed, the inclusion $\subseteq$
  is obvious because the action of correspondences preserves
  homological equivalence of cycles. The inclusion $\supseteq$ follows
  from the fact that $q$ is an idempotent.

  Secondly, given an integer $l$, $\CH_l(M_{2l})_\hom$ vanishes if and
  only if $\ker \big( (p_{2l})_* : \CH_l(M) \r \CH_l(M) \big) =
  \CH_l(M)_\hom$. As such, our formulation really is equivalent to
  Murre's original formulation \cite{Murre1} of conjecture (D) for
  smooth projective varieties. To see this, recall that the idempotent
  $p_{2l}$ has homology class the central projection on $H_{2l}(M)$
  and, as such, acts as the identity on $H_{2l}(M)$. Then note that,
  by functoriality of the cycle class map with respect to the action
  of correspondences, we always have $\ker \big( (p_{2l})_* : \CH_l(M)
  \r \CH_l(M) \big) \subseteq \CH_l(M)_\hom$. It is obvious that $\ker
  \big( (p_{2l})_* : \CH_l(M) \r \CH_l(M) \big) = \CH_l(M)_\hom$
  implies that $\CH_l(M_{2l})_\hom$ vanishes. Conversely,
  $\CH_l(M_{2l})_\hom = 0$ clearly implies that $\ker \big( (p_{2l})_*
  : \CH_l(M) \r \CH_l(M) \big) \supseteq \CH_l(M)_\hom$.
\end{remark}

\begin{proposition} \label{basechange} Let $K/k$ be a field extension
  and let $N$ be a motive over $k$. Assume that $N$ has a Chow--K\"unneth
  decomposition $\bigoplus_{i \in \Z} N_i$ and that $N_K$ is endowed
  with the induced Chow--K\"unneth decomposition $\bigoplus_{i \in \Z}
  (N_i)_K$. Consider the following statements :
  
 \begin{enumerate}
\item $N$ satisfies Murre's conjecture (B) ;
\item $N_K$ satisfies Murre's conjecture (B) ;
\item $N$ satisfies Murre's conjecture (D) ;
\item $N_K$ satisfies Murre's conjecture (D).
 \end{enumerate} Then $(2) \Rightarrow (1)$ and $(4) \Rightarrow (3)$.
 
 \noindent Moreover, if $k$ is a universal domain, then $(2)
 \Leftrightarrow (1)$ and $(4) \Leftrightarrow (3)$.
\end{proposition}
\begin{proof}
  That $(2) \Rightarrow (1)$ and $(4) \Rightarrow (3)$ follows
  immediately from the fact that the base-change map $\CH_l(N) \r
  \CH_l(N_K)$ is injective for all $l$; see Lemma \ref{bc}.

  Assume now that $k$ is a universal domain. Consider $F \subset k$ a
  field of definition of $N$ and of its Chow--K\"unneth decomposition
  $\bigoplus_{i \in \Z} N_i$ which is finitely generated. Let
  $\overline{K}/K$ be an algebraic closure of $K$. By the above, it is
  enough to show that if $N$ satisfies Murre's conjecture (B) or (D),
  then $N_{\overline{K}}$ endowed with the induced Chow--K\"unneth
  decomposition $\bigoplus_{i \in \Z} (N_i)_{\overline{K}}$ satisfies
  Murre's conjecture (B) or (D), respectively. Fix a field isomorphism
  $\overline{K} \cong k$ which restricts to the identity on
  $F$. Pulling back along that isomorphism, we get isomorphisms
  $\CH_l(N_i) \cong \CH_l((N_i)_{\overline{K}})$ and $\CH_l(N_i)_\hom
  \cong \CH_l((N_i)_{\overline{K}})_\hom$ for all $l$ and all
  $i$. This finishes the proof of the proposition.
  \end{proof}

  \begin{theorem}[K. Xu \& Z. Xu \cite{XuXu}] \label{XX} Assume that
    $k$ is algebraically closed. Let $X$ be a smooth projective
    variety over $k$ and let $C$ be a smooth projective curve over $k$
    with function field $K=k(C)$. Assume that $X$ has a
    Chow--K\"unneth decomposition and that $X_K$ endowed with the
    induced Chow--K\"unneth decomposition satisfies Murre's
    conjectures (B) and (D). Then $X \times C$ has a Chow--K\"unneth
    decomposition that satisfies (B).
\end{theorem}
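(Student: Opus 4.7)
The strategy is to form the Chow--K\"unneth decomposition of $X \times C$ as the tensor of the given decomposition of $X$ with the classical decomposition of the curve $C$, and verify (B) case by case. Since $k$ is algebraically closed, fix a rational point $e \in C(k)$ and form the standard projectors $\pi_0^C := C \times \{e\}$, $\pi_2^C := \{e\} \times C$, and $\pi_1^C := \Delta_C - \pi_0^C - \pi_2^C$ in $\CH_1(C \times C)$, so that $\h(C) = \mathds{1} \oplus \h_1(C) \oplus \mathds{1}(1)$. Tensoring with $\h(X) = \bigoplus_i \h_i(X)$ produces the candidate $\h(X \times C) = \bigoplus_n M_n$ with $M_n := \bigoplus_{i+j=n} \h_i(X) \otimes \h_j(C)$; the K\"unneth formula in cohomology shows this is indeed a Chow--K\"unneth decomposition.

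To verify $\CH_l(M_n) = 0$ for $n < 2l$, the $j = 0$ and $j = 2$ summands contribute $\CH_l(\h_i(X))$ with $i = n < 2l$ and $\CH_{l-1}(\h_i(X))$ with $i = n - 2 < 2(l-1)$, respectively; both vanish by (B) for $X$, which is a consequence of (B) for $X_K$ via Proposition \ref{basechange}. The remaining case is $j = 1$: one must show $\CH_l(\h_i(X) \otimes \h_1(C)) = 0$ for $i \leq 2l - 2$.

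For this, the key tool is the localization exact sequence
$$\bigoplus_{c} \CH_l(X) \xrightarrow{\oplus (i_c)_*} \CH_l(X \times C) \xrightarrow{j^*} \CH_{l-1}(X_K) \to 0$$
(with the sum over closed points $c$ of $C$), combined with the identities $(\pi_1^C)_*\{c\} = \{c\} - \{e\}$ in $\CH_0(C)$ and $(\pi_1^C)_*[C] = 0$ in $\CH_1(C)$. Applying the projector $(\pi_i^X \times \pi_1^C)_*$ yields a three-term complex
$$\CH_l(\h_i(X)) \otimes \Pic^0(C)_\Q \longrightarrow \CH_l(\h_i(X) \otimes \h_1(C)) \longrightarrow \CH_{l-1}(\h_i(X)_K)_\hom,$$
in which the first map is induced by $\beta \otimes (\{c\} - \{e\}) \mapsto \beta \times (\{c\} - \{e\})$ and the second by $j^* \circ (\pi_i^X)_*$, with a specialization argument at rational points of $C$ ensuring that its image lands in the homologically trivial subgroup. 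For $i < 2l$, hypothesis (B) for $X$ kills the left term and thus the kernel of the middle-to-right map, so the middle group injects into the right; then (B) for $X_K$ at level $l-1$ yields $\CH_{l-1}(\h_i(X)_K) = 0$ for $i < 2l - 2$, while (D) for $X_K$ at the same level yields $\CH_{l-1}(\h_{2l-2}(X)_K)_\hom = 0$ in the boundary case $i = 2l - 2$. Together these cover the required range $i \leq 2l - 2$.

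The principal obstacle is the careful construction of the three-term complex above: identifying the kernel of the restriction to the generic fiber with $\CH_l(\h_i(X)) \otimes \Pic^0(C)_\Q$, and invoking specialization to show that the image of $j^* \circ (\pi_i^X)_*$ lies in the homologically trivial subgroup $\CH_{l-1}(\h_i(X)_K)_\hom$. This is the technical heart of the Xu--Xu argument, and it is precisely here that hypotheses (B) for $X$ and both (B) and (D) for $X_K$ enter essentially.
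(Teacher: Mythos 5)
Your proof is correct and follows essentially the same route as the paper's: both reduce to the $\h_1(C)$-tensor piece, exploit the localization exact sequence for $X\times C \to C$ together with the commutation of $\pi_i^X$ with restriction to the generic fiber (Lemma \ref{commute}), use the algebraic equivalence between the generic and a special fiber to land in $\CH_{l-1}(\h_i(X)_K)_\hom$, and then invoke (B) and (D) for $X_K$ on the generic fiber and (B) for $X$ on the fiber-supported kernel. You organize this as a three-term complex whereas the paper explicitly subtracts $\gamma|_{X\times c}\times[C]$ and then shows $(p\times\Delta_C)_*\gamma$ is fiber-supported, but the two are logically the same argument read in opposite order.
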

\begin{proof}
  Let's consider a Chow--K\"unneth component $M = (X,p)$ of $\h(X)$ of
  weight $j$ such that $M_K$ satisfies Murre's conjectures (B) and
  (D).
   Consider a closed point $c$ of $C$. The idempotents $ [C \times
  \{c\}]$ and $ [\{c\} \times C]$ in $\End(\h(C))$ induce a
  Chow--K\"unneth decomposition $\h(C)= \mathds{1} \oplus \h_1(C)
  \oplus \mathds{1}(1)$, where $ \h_1(C) = (C,\pi_1)$ and $\pi_1 :=
  \Delta_C - [C \times \{c\}] - [\{c\} \times C]$.  It is clear that
  $M \otimes \mathds{1}$ and $M \otimes \mathds{1}(1)$ satisfy (B).
  The motive $M \otimes \h_1(C) = (X\times C, p \times \pi_1)$ has
  weight $j+1$ and, therefore, we only have to prove that $\CH_l(M
  \otimes \h_1(C)) = 0$ for $j+1 < 2l$ (and for $j+1 > d+l+1$, if one
  cares about the stronger form of (B); see Remark \ref{strongB}).  In
  other words, we have to show that $p \times \pi_1$ acts trivially on
  $\CH_l(X \times C)$ for $j+1 < 2l$ (and for $j+1 > d+l+1$).

  Let $\gamma \in \CH_l(X \times C)$. Since $\pi_1$ acts trivially on
  $[C] \in \CH_1(C)$, we see that $p \times \pi_1$ acts trivially on
  cycles of the form $\alpha \times [C]$ where $\alpha \in
  \CH_{l-1}(X)$. Because $$\gamma = (\gamma - \gamma|_{X\times c}
  \times [C]) + \gamma|_{X\times c} \times [C],$$ we may assume that
  $\gamma|_{X\times c} = 0$.  It is then enough to show that $p \times
  \Delta_C$ acts trivially on cycles $\gamma \in \CH_l(X \times C)$
  such that $\gamma|_{X\times c} = 0$ for $j+1 < 2l$ (and for $j+1 >
  d+l+1$).

  Let $\eta$ be the generic point of $C$. The cycle $\gamma|_{X \times
    \eta} \in \CH_{l-1}(X_K)$ is then algebraically equivalent to the
  cycle $\gamma|_{X \times c_K} \in \CH_{l-1}(X_K)$. The latter cycle
  is obtained as the image of $\gamma|_{X\times c}$ by the base-change
  map $\CH_l(X) \r \CH_l(X_K)$ and is thus zero by assumption.
  Therefore, $\gamma|_{X \times \eta} \in \CH_{l-1}(X_K)$ is
  algebraicallly trivial and hence homologically trivial. Since
  $(X_K,p_K)$ is assumed to satisfy Murre's conjecture (D), it follows
  that $ (p_K)_*(\gamma|_{X \times \eta}) = 0$ if $j = 2l-2$; and
  since $(X_K,p_K)$ is assumed to satisfy Murre's conjecture (B), it
  follows that $ (p_K)_*(\gamma|_{X \times \eta}) = 0$ if $j < 2l-2$
  (and if $j > d+l-1$).

  Now, we have the following key formula; see the proof of Lemma
  \ref{genericpoint} or \cite[ Lemma 3.2(ii)]{XuXu}.
\begin{center} $\big((p \times \Delta_C)_*\gamma\big)|_{X \times \eta}
  = (p_K)_*(\gamma|_{X \times \eta}).$ \end{center} We deduce, from
the localization exact sequence ($C^{(1)}$ denotes the set of closed
points of $C$) $$\bigoplus_{d \in C^{(1)}} \CH_l(X \times d)
\longrightarrow \CH_l(X \times C) \longrightarrow \CH_{l-1}(X \times
\eta) \longrightarrow 0,$$ that, for $j < 2l-1$ (and for $j > d+l-1$),
$$(p \times \Delta_C)_*\gamma = \sum_i \gamma_i \times
[d_i]$$ for finitely many cycles $\gamma_i \in \CH_{l}(X)$ and for
finitely many closed points $d_i$ in $C$. The correspondence $p \times
\Delta_C$ is an idempotent and this yields that $$(p \times
\Delta_C)_*\gamma = \sum_i (p_* \gamma_i) \times [d_i].$$ But then,
because $(X,p)$ is pure of weight $j$ and because $(X_K,p_K)$
satisfies Murre's conjecture (B), it follows from Proposition
\ref{basechange} that $(X,p)$ satisfies Murre's conjecture (B), i.e.,
that $p_*\CH_l(X) = 0$ for $j<2l$ (and for $j>d+l$). Thus, $(p \times
\Delta_C)_*\gamma = 0$ for $j<2l-1$ (and for $j>d+l$).
\end{proof}

\begin{proposition} \label{indB} Let $M$ be a motive over $k$ which is
  finite-dimensional. Assume that $M$ has a Chow--K\"unneth
  decomposition that satisfies Murre's conjecture (B) or (D).  Then
  any other Chow--K\"unneth decomposition of $M$ satisfies Murre's
  conjecture (B) or (D), respectively.
  
  \noindent Moreover, if $N$ is a direct summand of $M$, then $N$
  satisfies Murre's conjecture (B) or (D), respectively.
\end{proposition}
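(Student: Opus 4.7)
The plan is to show that any two Chow--K\"unneth decompositions of a finite-dimensional motive $M$ have pairwise isomorphic summands as motives over $k$; since Murre's conjectures (B) and (D) are statements purely about the Chow groups of these summands, this will transfer either conjecture from one decomposition to the other.

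Let $\{p_i\}_{i \in \Z}$ and $\{p'_i\}_{i \in \Z}$ be the orthogonal idempotents of two Chow--K\"unneth decompositions of $M$. The first step is to observe that both families reduce modulo homological equivalence to the K\"unneth projectors on $M^\hom$. These are uniquely determined: they are the idempotents in $\End(M^\hom)$ that map, under the embedding $\End(M^\hom) \hookrightarrow \End(H_*(M))$, to the projections onto each $H_i(M)$. Consequently the differences $p_i - p'_i$ lie in the ideal $J := \ker\bigl(\End(M) \r \End(M^\hom)\bigr)$. Since numerical equivalence is coarser than homological equivalence, $J$ is contained in the kernel of the reduction $\End(M) \r \End(\overline{M})$, which is nilpotent by Kimura's theorem (recalled in the Introduction). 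Thus $p_i - p'_i$ is nilpotent for each $i$.

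For the second step I would apply the classical lifting-of-idempotents argument. For each $i$, set
\[
u_i \,:=\, p_i \circ p'_i + (\id_M - p_i) \circ (\id_M - p'_i) \;\in\; \End(M).
\]
A short computation gives $p_i \circ u_i = u_i \circ p'_i = p_i \circ p'_i$, while modulo $J$ one has $u_i \equiv p_i + (\id_M - p_i) = \id_M$; hence $u_i - \id_M$ is nilpotent and $u_i$ is invertible in $\End(M)$. The intertwining relation then forces $u_i$ to restrict to an isomorphism $M'_i \cong M_i$ of motives over $k$. This isomorphism induces an isomorphism $\CH_l(M'_i) \cong \CH_l(M_i)$ preserving the subgroup of homologically trivial cycles (since it is induced by a morphism of motives), so (B) and (D) transfer componentwise from $\{M_i\}$ to $\{M'_i\}$.

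The only place where the finite-dimensionality hypothesis is genuinely used is the nilpotence of $J$; everything else is formal ring theory and functoriality of Chow groups. I do not anticipate a serious obstacle, but one should be attentive to the fact that the conjugation is constructed componentwise in $i$ (rather than by a single global unit), which is enough here precisely because conjectures (B) and (D) are stated as componentwise vanishings of Chow groups.
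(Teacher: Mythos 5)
Your proof is correct and follows essentially the same route as the paper's: you observe that the two families of Chow--K\"unneth projectors agree modulo homological equivalence, use Kimura's nilpotence theorem (the finite-dimensionality hypothesis), and conclude that corresponding summands are isomorphic as Chow motives via a lifting-of-idempotents argument. The paper's proof compresses the last step into ``by finite-dimensionality, $M_i$ is isomorphic to $M_i'$''; you have simply spelled out the conjugation by the unit $u_i$ that witnesses this isomorphism.
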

\begin{proof} Let $\bigoplus_{i \in \Z} M_i$ and $\bigoplus_{i \in \Z}
  M_i'$ be two Chow--K\"unneth decompositions for $M$. By definition
  of a Chow--K\"unneth decomposition, $M_i$ and $M_i'$ are isomorphic
  modulo homological equivalence for all $i$. Therefore, by
  finite-dimensionality, $M_i$ is isomorphic to $M_i'$ for all $i$. It
  follows that $\CH_l(M_i)= 0$ if and only if $\CH_l(M_i') = 0$, and
  that $\CH_l(M_{2l})_\hom= 0$ if and only if $\CH_l(M_{2l}')_\hom =
  0$.
  
   The Chow--K\"unneth decomposition $\bigoplus_{i \in \Z}
    M_i$ of $M$ defines a K\"unneth decomposition of $M$ modulo
    homological equivalence. The K\"unneth projectors are central in
    $\End(M^\hom)$. Therefore, as a direct summand of $M$, the motive
    $N$ has a K\"unneth decomposition.  Thus, by finite-dimensionality,
    $N$ has a Chow--K\"unneth decomposition $\bigoplus_{i \in \Z} N_i$,
    where each $N_i$ is isomorphic to a direct summand of $M_i$. This
    yields the proposition.
 \end{proof}

The following proposition and its proof are very similar to
\cite[Proposition 3.1]{Vial2}.

\begin{proposition} \label{indC} Let $M$ be a motive over $k$ which
  has a Chow--K\"unneth decomposition. Assume that Murre's conjecture
  (B) holds for $M$ and for $M \otimes M^\vee$ with respect to any
  choice of Chow--K\"unneth decomposition. Then $M$ satisfies Murre's
  conjecture (C).
\end{proposition}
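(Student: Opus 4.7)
The plan is to transfer the comparison between the two Chow--K\"unneth decompositions $M = \bigoplus_i M_i = \bigoplus_i M_i'$ into a statement about $M \otimes M^\vee$. Writing $p = \sum_{i \geq 2l+\nu} \pi_i$ and $p' = \sum_{i \geq 2l+\nu} \pi_i'$ for the idempotents in $\End(M)$ whose images compute the two candidates for $F^\nu \CH_l(M)$, it suffices to establish the identities $p \circ p' = p'$ and $p' \circ p = p$ in $\End(M)$: since $p$ and $p'$ are honest idempotents acting on $\CH_l(M)$, these yield $\im p'_* \subseteq \im p_*$ and $\im p_* \subseteq \im p'_*$ respectively, hence the desired equality of filtrations.

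The identity $p \circ p' = p'$ is equivalent to $(\id - p) \circ p' = \sum_{j < 2l+\nu,\, k \geq 2l+\nu} \pi_j \circ \pi_k' = 0$, and so reduces to proving $\pi_j \circ \pi_k' = 0$ whenever $j < k$. The key observation is that the orthogonal idempotents $\{\pi_a \otimes (\pi_b')^\vee\}_{a,b}$ sum to the identity on $M \otimes M^\vee$ and, when regrouped by weight $i = a-b$, define a ``mixed'' Chow--K\"unneth decomposition of $M \otimes M^\vee$: the homological class of $\pi_a \otimes (\pi_b')^\vee$ is the expected K\"unneth projector onto $H_a(M) \otimes H_b(M)^\vee$. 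Under the canonical identification $\End(M) \cong \CH_0(M \otimes M^\vee)$, the composition $\pi_j \circ \pi_k'$ factors through $M_k'$ at the source and $M_j$ at the target, and hence corresponds to a cycle in $\CH_0(M_j \otimes (M_k')^\vee)$, that is, to an element of the weight-$(j-k)$ piece of this mixed decomposition. When $j<k$, Murre's conjecture (B) applied to $M \otimes M^\vee$ with this mixed decomposition, in degree $l = 0$ and weight $i = j-k < 0$, immediately forces $\pi_j \circ \pi_k' = 0$.

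The symmetric identity $p' \circ p = p$ is proved identically after exchanging the roles of the two decompositions, now using the other mixed structure $\{\pi_a' \otimes \pi_b^\vee\}_{a,b}$ on $M \otimes M^\vee$. This is where the freedom granted by the hypothesis---that conjecture (B) is assumed to hold for $M \otimes M^\vee$ with respect to \emph{every} choice of Chow--K\"unneth decomposition---is genuinely used, since both mixed Chow--K\"unneth structures must be available.

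The main conceptual step, and the only point requiring care, is to recognize that although the hypothesis concerns Chow--K\"unneth decompositions of $M \otimes M^\vee$, the useful application is to the mixed decompositions assembled from the two Chow--K\"unneth decompositions of $M$ under comparison, and to verify that under the tautological identification $\End(M) = \CH_0(M \otimes M^\vee)$ the cycle $\pi_j \circ \pi_k'$ really lands in the weight-$(j-k)$ component. Once this compatibility is in place, the rest of the argument is a direct, one-line application of conjecture (B) to a single Chow group in negative weight.
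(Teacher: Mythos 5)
Your proof is correct and takes essentially the same approach as the paper: both form the mixed Chow--K\"unneth decomposition of $M \otimes M^\vee$ out of the two competing decompositions of $M$, apply conjecture (B) in degree $0$ to kill the negative-weight components (equivalently $\Hom(M_i,M_j')=0$ for $i>j$), and conclude the two filtrations agree by symmetry. Your reformulation via the idempotent identities $p\circ p'=p'$ and $p'\circ p=p$ is just a more explicit packaging of the containment $F\subseteq F'$ and its converse.
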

\begin{proof} Let $\bigoplus_{i \in \Z} M_i$ and $\bigoplus_{i \in \Z}
  M'_i$ be two Chow--K\"unneth decompositions for $M$, and let $F$ and
  $F'$ be the induced filtrations on $\CH_l(M)$. We first show that
  $\Hom (M_i,M_j')=0$ for all $i>j$. Indeed, $(M \otimes M^\vee)_k :=
  \bigoplus_{k = j-i} M_j' \otimes M_i^\vee$ defines a Chow--K\"unneth
  decomposition for $M \otimes M^\vee$. By assumption, $\CH_0\big( (M
  \otimes M^\vee)_k \big) = 0$ for all $k<0$. Therefore,
  \begin{center} $\Hom (M_i,M_j') := \CH_0(M_j' \otimes M_i^\vee) = 0$
    for all $i>j$. \end{center} This implies that $F$ is finer than
  $F'$, i.e., that $F \subseteq F'$. By symmetry, we conclude that $F =
  F'$.
\end{proof}

We are now in a position to prove the main result of this section.

\begin{theorem} \label{T:D} Let $\Omega$ be a universal domain that
  contains $k$.  Murre's conjecture (D) for products of curves over
  $\Omega$ implies Murre's conjectures (A), (B), (C) and (D) for
  motives over $k$ which are of abelian type.
\end{theorem}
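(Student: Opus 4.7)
The plan is to combine the preceding results in a clean way, observing that essentially all the heavy lifting has already been done. Let $M$ be a motive of abelian type over $k$. By definition, $M$ is a direct summand (up to a Tate twist) of the motive of a product of curves $X$ defined over $k$; moreover $M$ is finite-dimensional and, by Corollary \ref{C:potab} or by the results of Deninger--Murre combined with stability of Chow--K\"unneth decompositions under tensor products and direct summands in the finite-dimensional setting, $M$ admits a Chow--K\"unneth decomposition. This establishes Murre's conjecture (A).

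For conjectures (B) and (D), first observe that by Corollary \ref{a} (which is the key input using Theorem \ref{XX} on the inductive passage from $X_K$ to $X \times C$), products of curves over $k$ satisfy (B) and (D). Pick an integer $n$ and a product of curves $X$ over $k$ such that $M(n)$ is a direct summand of $\h(X)$. Since $\h(X)$ is finite-dimensional and satisfies (B) and (D), Proposition \ref{BDdirectsummand} yields (B) and (D) for $M(n)$. I would then observe that tensoring with $\mathds{1}(n)$ shifts the indices of a Chow--K\"unneth decomposition and the codimension of cycles in a parallel way: the equality $\CH_l\big((M(n))_{i+2n}\big) = \CH_{l-n}(M_i)$ shows at once that (B) for $M(n)$ is equivalent to (B) for $M$, and likewise for (D). Combined with Proposition \ref{indB}, which ensures independence of (B) and (D) from the choice of Chow--K\"unneth decomposition in the finite-dimensional case, this gives (B) and (D) for $M$ with respect to any Chow--K\"unneth decomposition.

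For conjecture (C), the plan is to invoke Proposition \ref{indC}. This requires (B) for both $M$ and $M \otimes M^\vee$. Since the subcategory of motives of abelian type is a \emph{thick and rigid} tensor subcategory of Chow motives, it is closed under duals and tensor products; hence $M \otimes M^\vee$ is again of abelian type, and the argument of the previous paragraph applies to it as well. Proposition \ref{indC} then delivers (C) for $M$.

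The only potentially delicate step is the reduction via Tate twists: one must check that (B) and (D) are insensitive to twisting by $\mathds{1}(n)$ when the Chow--K\"unneth decomposition is twisted accordingly. This is a routine bookkeeping verification once one notes that $(M(n))_{i} = M_{i-2n}(n)$ in the covariant convention (so that $H_i((M(n))_i) = H_{i-2n}(M_{i-2n}) = H_i(M(n))$), together with $\CH_l(N(n)) = \CH_{l-n}(N)$ for any motive $N$. No other step presents a genuine obstacle: the substantive contents have been packaged into Corollary \ref{a}, Proposition \ref{BDdirectsummand}, Proposition \ref{indB}, and Proposition \ref{indC}, and what remains is merely to assemble them.
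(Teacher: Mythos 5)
Your proof is correct and follows essentially the same route as the paper: both rely on Corollary \ref{a} to establish Murre's conjectures for products of curves, on Proposition \ref{BDdirectsummand} to pass to direct summands, on Proposition \ref{indB} for independence of the choice of Chow--K\"unneth decomposition, and on Proposition \ref{indC} to deduce (C) via $M\otimes M^\vee$. The only organizational difference is that the paper first base-changes to $\Omega$ (where $M_\Omega$ is a direct summand of a product of curves), applies Proposition \ref{BDdirectsummand} there, and then descends to $k$ by Propositions \ref{basechange} and \ref{indB}; you instead invoke Corollary \ref{a} directly over $k$ and handle the Tate twist explicitly, which is arguably cleaner and more careful than the paper's slightly informal treatment of the twist — but this is bookkeeping, not a different argument.
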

\begin{proof} Let $M$ be a motive over $k$ which is of abelian type.
  Then $M_\Omega$ is isomorphic to a direct summand of the motive of a
  product of smooth projective curves over $\Omega$.  We claim that,
  under the assumption that products of curves over $\Omega$ satisfy
  Murre's conjecture (D), products of curves over $\Omega$ satisfy
  Murre's conjectures (A)--(D). Indeed, a product of smooth projective
  curves over $\Omega$ is finite-dimensional \cite[Corollaries 4.4 \&
  5.11]{Kimura} and has a Chow--K\"unneth decomposition.  By $(3)
  \Rightarrow (4)$ in Proposition \ref{basechange}, products of curves
  defined over the function field of a curve over $\Omega$ satisfy
  Murre's conjecture (D). It follows from Theorem \ref{XX}, by a
  straightforward induction on the number of curves involved in the
  product, that Murre's conjecture (B) holds for products of curves
  over $\Omega$.
    
    Now, by
  Proposition \ref{indB}, we find that $M_\Omega$ satisfies Murre's
  conjectures (A), (B) and (D). A combination of Propositions
  \ref{basechange} and \ref{indB} shows that $M$ satisfies Murre's
  conjectures (A), (B) and (D). Finally, the motive $M \otimes M^\vee$ is
  also of abelian type over $k$. Therefore Murre's conjecture (B) also
  holds for $M \otimes M^\vee$. Thus, thanks to Proposition
  \ref{indC}, $M$ also satisfies Murre's conjecture (C).
\end{proof}

\begin{remark} \label{R:Beauville}
Let $A$ be an abelian variety of dimension $d$ over $k$. Let $m$ be an
integer and let $[m] : A \r A$ denote the multiplication-by-$m$
endomorphism of $A$. Then there exists a Chow--K\"unneth decomposition
$\{\Pi_i\}$ for $A$ such that $\Pi_i \circ \Gamma_{[m]} = \Gamma_{[m]}
\circ \Pi_i = m^i \cdot \Pi_i \in \CH_d(A \times A)$; see \cite{DM}.
Moreover, there is a decomposition \medskip \begin{center} $\CH_l(A) =
  \bigoplus_{l}^{l+d} \CH_l^{(i)}(A)$, where $\CH_l^{(i)}(A):= \{\alpha
  \in \CH_l(A) \, | \, [m]_*\alpha=m^i\alpha, \forall m \in \Z \}.$
\end{center} \medskip Beauville \cite{Beauville} conjectured that
$\CH_l^{(i)}(A) = 0$ for $i < 2l$, and Murre checked \cite[Lemma
2.5.1]{Murre1} that
$$(\Pi_i)_*\CH_l(A) = \CH_l^{(i)}(A).$$ We thus see, thanks to
Proposition \ref{indB}, that Beauville's conjecture for $A$ equipped
with the Chow--K\"unneth decomposition $\{\Pi_i\}$ above is equivalent
to Murre's conjecture (B) for $A$ equipped with any Chow--K\"unneth
decomposition.

Beauville \cite{Beauville} also conjectured that the cycle class map
$\CH_l(A) \r H^{2\dim A - 2l}(A_{\overline{k}},\Q_\ell)$ to
$\ell$-adic cohomology is injective when restricted to
$\CH_l^{(2l)}(A)$. The identity $(\Pi_i)_*\CH_l(A) = \CH_l^{(i)}(A)$
shows that this conjecture is actually equivalent to Murre's
conjecture (D) for $A$. Since motives of abelian type are spanned
either by motives of curves or by motives of abelian varieties,
Proposition \ref{indB} implies that Murre's conjecture (D)
for products of curves is equivalent to Beauville's conjecture that
$\CH_l^{(2l)}(A) \r H^{2\dim A - 2l}(A_{\overline{k}},\Q_\ell)$ is
injective for all $l$ and all abelian varieties.

We have thus showed that if $\CH_l^{(2l)}(A_\Omega) \r H^{2\dim A -
  2l}(A_{\Omega},\Q_\ell)$ is injective for all $l$ and all abelian
varieties $A$ over $k$, then Beauville's vanishing conjecture holds,
i.e., $\CH_l^{(i)}(A) = 0$ for all abelian varieties $A$ over $k$ and
all $i < 2l$.
\end{remark}

\vspace{7pt}
\section{Chow groups and motives of abelian type \\
  Proof of Theorem \ref{FD}} \label{S:3}

\subsection{Chow groups and field extensions}

The following lemma is certainly well known.

\begin{lemma} \label{lemma2} Let $f : M \r N$ be a morphism of motives
  defined over $k$. Assume that, for some field extension $K/k$,
  $(f_K)_* : \CH_0(M_K) \r \CH_0(N_K)$ is surjective or injective.  Then
  $f_* : \CH_0(M) \r \CH_0(N)$ is surjective or injective, respectively.
\end{lemma}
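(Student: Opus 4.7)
The injectivity direction is formal; the surjectivity direction requires a spread-and-specialize descent from $K$ down to a finite extension of $k$, after which a norm/trace trick finishes the job.

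\emph{Injectivity.} Suppose $(f_K)_*$ is injective. Lemma \ref{bc} extends from schemes to motives: writing a motive as $(X,p,n)$, the idempotent $p_*$ commutes with base-change pullback, and therefore $\CH_0(L) \r \CH_0(L_K)$ is injective for any motive $L$ over $k$. Thus $f_*\alpha = 0$ forces $(f_K)_*\alpha_K = 0$, hence $\alpha_K = 0$, hence $\alpha = 0$.

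\emph{Surjectivity.} Let $\beta \in \CH_0(N)$ and choose $\tilde\alpha \in \CH_0(M_K)$ with $(f_K)_*\tilde\alpha = \beta_K$. Both $\tilde\alpha$ and the rational equivalences witnessing $(f_K)_*\tilde\alpha - \beta_K = 0$ are defined over a finitely generated subextension of $K/k$, so we may replace $K$ by $k(U)$ for a smooth irreducible $k$-variety $U$. After possibly shrinking $U$, the cycle $\tilde\alpha$ spreads to a cycle $\alpha_U$ on $M_U$ satisfying $(f_U)_*\alpha_U = p_U^*\beta$, where $p_U : M_U \r M$ is the projection. Pick any closed point $u \in U$; its residue field $k(u)/k$ is finite. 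The closed immersion $\{u\} \hookrightarrow U$ is regular since $U$ is smooth, and as in the proof of Theorem \ref{rigidity}, the associated Gysin pullback on Chow groups commutes with proper push-forwards, flat pull-backs and intersection products, hence with the action of the correspondence $f$. Specializing the relation $(f_U)_*\alpha_U = p_U^*\beta$ at $u$ produces $\alpha_u \in \CH_0(M_{k(u)})$ with $(f_{k(u)})_*\alpha_u = \beta_{k(u)}$.

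We are thus reduced to the case $K/k$ finite. Let $\pi : M_K \r M$ denote the natural finite flat base-change morphism, and set $\alpha := \frac{1}{[K:k]} \pi_*\tilde\alpha \in \CH_0(M)$. Flat base change combined with the projection formula, applied to the cartesian squares attached to $\pi$, gives $f_* \circ \pi_* = \pi_* \circ (f_K)_*$; and Lemma \ref{bc} gives $\pi_*\beta_K = [K:k]\,\beta$. Hence
\[
f_*\alpha \;=\; \tfrac{1}{[K:k]}\,\pi_*(f_K)_*\tilde\alpha \;=\; \tfrac{1}{[K:k]}\,\pi_*\beta_K \;=\; \beta.
\]

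The one place where care is required is the specialization step: one must check that Gysin pullback along $\{u\} \hookrightarrow U$ commutes with the push-forward by the correspondence $f_U$. This is purely formal from \cite[\S 6]{Fulton}, and is exactly the kind of bookkeeping already carried out in the proof of Theorem \ref{rigidity}; no new ingredient is needed.
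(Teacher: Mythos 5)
Your proof is correct and follows essentially the same route as the paper's: reduce to a finitely generated extension, specialize along the Gysin pullback at a closed point (relying on its compatibility with push-forward and correspondence action, exactly as in Fulton \S\S 6, 20.3) to land in a finite extension of $k$, and then descend using the fact that the norm composite $\CH_0 \r \CH_0(\cdot_{k(u)}) \r \CH_0$ is multiplication by the degree. The only cosmetic difference is that you make the spread to a cycle $\alpha_U$ over $U$ explicit before specializing, whereas the paper invokes Fulton's specialization map $\sigma$ directly; these are the same mechanism.
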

\begin{proof} Given a smooth projective variety $X$ over $k$, by Lemma
  \ref{bc}, the base-change map $\CH_l(X) \r \CH_l(X_K)$ is injective
  for all $l$. Besides, we have the following commutative diagram
  \begin{center}
    $\xymatrix{\CH_0(M) \ar[r] \ar[d]^{f_*} & \CH_0(M_K)
      \ar[d]^{(f_K)_*}
      \\
      \CH_0(N) \ar[r] & \CH_0(N_K). }$
  \end{center}It is then straightforward to see that if $(f_K)_*$ is
  injective, then $f_*$ is injective.

  Let's now assume that there is a field $K/k$ such that $(f_K)_* :
  \CH_0(M_K) \r \CH_0(N_K)$ is surjective. Let's pick a cycle $\gamma
  \in \CH_0(N)$ and let's denote by $\gamma_K$ its image in
  $\CH_0(N_K)$. There is a cycle $\beta \in \CH_0(M_K)$ such that
  $(f_K)_* \beta = \gamma_K$. The cycle $\beta$ is defined over a
  finitely generated extension of $k$. We may therefore assume that
  $K$ is finitely generated over $k$ and that it is the function field
  of a smooth quasi-projective variety $Y$ over $k$. Let $y$ be a
  closed point in $Y$ and let $k(y)/k$ be its residue field. Such a
  point defines a regular embedding $y \hookrightarrow Y$, so that,
  for any smooth projective variety $X$ over $k$ and for any integer
  $l$, there is a specialization map $\sigma : \CH_{l}(X_K) \r
  \CH_l(X_{k(y)})$ which commutes with flat pull-backs, proper
  push-forwards and intersection product; see \cite[\S \S 6 \&
  20.3]{Fulton}. Moreover, for a cycle $\alpha \in \CH_l(X)$, we have
  $\sigma(\alpha_K) = \alpha_{k(y)}$ because $\sigma(\alpha_K)$ is
  obtained as the intersection of $\alpha \times Y$ with $X \times
  k(y)$. It immediately follows, after specialization, that
  $(f_{k(y)})_* \sigma(\beta) = \alpha_{k(y)}$.  Now, for any smooth
  projective variety $X$ over $k$, the composite map $\CH_0(X) \r
  \CH_0(X_{k(y)}) \r \CH_0(X)$ is multiplication by $[{k(y)}:k]$; see
  Lemma \ref{bc}.  These maps commute with the action of
  correspondences and this yields that $\alpha$ lies in the
  image of $f_* : \CH_0(M) \r \CH_0(N)$.
 \end{proof}

  The converse to Lemma \ref{lemma2} is not true in general. Consider
  for instance a smooth projective curve $C$ over a finite field $F$
  with positive genus, a closed point $c$ on $C$, and the
  correspondence $f := [C \times c] \in \CH_1(C \times C) =
  \End(\h(C))$. Then $f_* : \CH_0(C) \r \CH_0(C)$ is an isomorphism
  (both Chow groups are spanned by $[c]$). However, $(f_K)_* :
  \CH_0(C_K)_\hom \r \CH_0(C_K)_\hom$ is zero for all field extensions
  $K/F$, and $\CH_0(C_K)_\hom \neq 0$ for some extension $K/F$ (for
  instance, a finite extension of $F(t)$ over which
  $\mathrm{Pic}^0(C_K)$ acquires a non-torsion rational point). Thus,
  for such a choice of field $K$, $(f_K)_* : \CH_0(C_K) \r \CH_0(C_K)$
  is neither injective, nor surjective.

  Anyhow, when the base-field $k$ is a universal domain, the converse
  does hold:
 
\begin{lemma} \label{lemma} Let $f : M \r N$ be a morphism of
    motives defined over $\Omega$. Then, for all fields $K$ over
    $\Omega$, $f_* : \CH_0(M) \r \CH_0(N)$ is surjective or injective
    if and only if $(f_K)_* : \CH_0(M_K) \r \CH_0(N_K)$ is surjective
    or injective, respectively.
\end{lemma}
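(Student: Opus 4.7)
The ``if'' direction in both statements is a special case of Lemma~\ref{lemma2}. For the ``only if'' direction, assume that $f_*$ is surjective (resp.\ injective) and let us show that $(f_K)_*$ is surjective (resp.\ injective) for every extension $K/\Omega$. Since a cycle in $\CH_0(M_K)$ is defined over a finitely generated subfield of $K$, the problem reduces to the case $K = L = \Omega(Y)$ for some smooth irreducible variety $Y$ over $\Omega$.

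The key use of the universal domain hypothesis is that every closed point $y \in Y$ has residue field $\Omega$. Hence, after spreading a cycle $\beta \in \CH_0(M_L)$ to $\tilde{\beta} \in \CH_*(M \times U)$ for some open $U \subseteq Y$, the Gysin specialization map $\sigma_y : \CH_0(M_L) \r \CH_0(M)$ at a closed point $y \in U$ satisfies $\sigma_y \circ \iota_L = \id$, where $\iota_L : \CH_0(M) \r \CH_0(M_L)$ is the base change; this is in sharp contrast with the argument in the proof of Lemma~\ref{lemma2}, where the corresponding composite is multiplication by $[k(y):k]$. Moreover, $\sigma_y$ commutes with the action of correspondences (cf.\ Lemma~\ref{commute}). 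In the injective case, for $\beta \in \ker(f_L)_*$ I obtain $f_*(\sigma_y\beta) = \sigma_y((f_L)_*\beta) = 0$ for every $y \in U(\Omega)$, and hence $\sigma_y\beta = 0$ by the hypothesis on $f_*$. In the surjective case, each $\sigma_y\gamma$ (for $\gamma \in \CH_0(N_L)$) admits a lift $\beta_y \in \CH_0(M)$ with $f_*\beta_y = \sigma_y\gamma$.

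The main obstacle is to upgrade this pointwise (in $y$) information to the desired global statements on $L$-cycles: that $\beta = 0$ in $\CH_0(M_L)$ in the injective case, and that $\gamma$ lifts globally through $(f_L)_*$ in the surjective case. I would approach this via the refined Bloch--Srinivas-type ``decomposition of the spread'' (Theorem~\ref{BS} in Section~\ref{sec-stdth}), combined with a rigidity of $0$-cycles argument: the locus of closed points $y \in U$ for which $\sigma_y\tilde{\beta}$ vanishes (respectively, lies in $f_*\CH_0(M)$) is a countable union of closed subvarieties of $U$; since it contains every $\Omega$-point and $U(\Omega)$ is Zariski-dense in $U$, it must equal $U$, and the generic fiber of $\tilde{\beta}$ must then vanish (resp.\ be hit by $(f \times \id_U)_*$) after shrinking $U$, which is exactly what is needed.
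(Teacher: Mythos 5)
Your ``if'' direction is right and agrees with the paper. For the ``only if'' direction, however, you have missed the paper's key idea and left a genuine gap in your own.

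The paper's proof is much simpler and uses a completely different mechanism: since $f$ is defined over a finitely generated subfield $k\subset\Omega$, and since any cycle over $K$ is defined over a subfield $L\supset\Omega$ finitely generated over $\Omega$ (hence with $\overline L$ of the same cardinality as $\Omega$), one can \emph{choose an abstract field isomorphism} $\sigma\colon\overline L\xrightarrow{\ \simeq\ }\Omega$ restricting to the identity on $k$. Pulling back along $\sigma$ and $\sigma^{-1}$ identifies $(f_{\overline L})_*$ with $f_*$, so they are surjective/injective simultaneously, and the statement for $L$ and then for $K$ follows from Lemma~\ref{lemma2}. No specialization at closed points and no decomposition-of-the-diagonal is needed at all.

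Your approach, by contrast, tries to reassemble the statement over $L=\Omega(Y)$ from its specializations at closed $\Omega$-points $y\in U$. You correctly identify that this is where the real work lies, but what you offer to close it is only a sketch of a nontrivial independent assertion: that the locus $\{\,y\in U:\sigma_y\tilde\beta=0\ \text{in}\ \CH_*(X)_\Q\,\}$ (resp.\ $\{\,y:\sigma_y\gamma\in\im f_*\,\}$) is a countable union of closed (or constructible) subsets of $U$. That is a serious claim about rational equivalence -- it requires producing families of rational equivalences of bounded complexity varying algebraically in $y$ (a Chow-variety/Hilbert-scheme argument), then a Baire-type argument over the uncountable field $\Omega$ to conclude one of these pieces is dense, then spreading that rational equivalence to the generic point. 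None of this is established in your proposal; Theorem~\ref{BS}, which you invoke, does not supply it either, since it is about decomposing an idempotent and says nothing about constructibility of rational-equivalence loci. So the proposal, as written, does not constitute a proof, and the route it suggests, while plausibly completable, would be substantially longer and harder than the paper's one-line transport-of-structure argument.
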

\begin{proof}
  Let us first prove the lemma for fields $K$ that have same
  cardinality as $\Omega$.  Let $k \subset \Omega$ be a field of
  definition of $f : M \r N$ which is finitely generated. Let
  $\overline{K}$ be an algebraic closure of $K$ and fix a field
  isomorphism $\sigma : \overline{K} \stackrel{\simeq}{\rightarrow}
  \Omega$ which restricts to the identity on $k$. We have the
  following commutative diagram
  \begin{center}
    $\xymatrix{\CH_0(M) \ar[r] \ar[d]^{f_*} & \CH_0(M_K) \ar[r]
      \ar[d]^{(f_K)_*}
      & \CH_0(M_{\overline{K}})  \ar[d]^{(f_{\overline{K}})_*} \\
      \CH_0(N) \ar[r] & \CH_0(N_K) \ar[r] & \CH_0(N_{\overline{K}})}$
  \end{center}
  where the horizontal arrows are induced by base-change and are
  therefore injective by Lemma \ref{bc}. We note, by pulling back
  along $\sigma$ or $\sigma^{-1}$, that $f_*$ is surjective or
  injective if and only if $(f_{\overline{K}})_*$ is surjective or
  injective, respectively. The lemma then follows from Lemma
  \ref{lemma2}.

  Now, assume that $K/\Omega$ is any field extension and that $f_*$ is
  surjective or injective, respectively. Let $\alpha$ be a cycle in
  $\CH_*(N_K)$ and let $\beta$ be a cycle in $\ker (f_K)_*$. These
  cycles are defined over a subfield $L$ of $K$ which is finitely
  generated over $\Omega$. By the above, $(f_L)_*$ is surjective or
  injective, respectively. It is then straightforward to see that
  $\alpha \in \im (f_K)_*$ and that $\beta = 0$. Thus, $(f_K)_*$ is
  surjective or injective, respectively.
\end{proof}

\subsection{A refinement of a theorem of Bloch and Srinivas}
\label{sec-stdth}

In order to prove the key Lemma \ref{prekey}, we need a slight
refinement of the decomposition of the diagonal argument of Bloch and
Srinivas which appears in \cite{BS}. This is embodied in Proposition
\ref{BS}. \medskip

First we prove a lemma which seems to be known as Lieberman's lemma
and which is quoted in \cite[3.1.4]{An} and \cite[1.10]{Scholl}
without proof. Let $X$, $X'$, $Y$ and $Y'$ be smooth projective
varieties over a field $k$.  Let $a \in \CH_p(X' \times X)$, $b \in
\CH_q(Y \times Y')$ and $\gamma \in \CH_r(X \times Y)$ be
correspondences. Let's write $({}^t a,b) := \tau_*({}^ta \times b)$,
where $\tau : X \times X' \times Y \times Y' \r X \times Y \times X'
\times Y'$ is the map permuting the two middle factors.

\begin{lemma} \label{BS-lemma} We have the formula
  $$({}^t a,b)_*\gamma =  b \circ \gamma \circ a
  \ \in \CH_{p+q+r- \dim X -\dim Y}(X' \times Y').$$
\end{lemma}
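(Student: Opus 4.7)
The plan is to prove that both sides of the claimed identity are equal to the same canonical pushforward computed on the fourfold product $P := X' \times X \times Y \times Y'$. Writing $\pi_{X'X}, \pi_{XY}, \pi_{YY'}, \pi_{X'Y'}$ for the projections from $P$ onto the indicated pairs of factors, I intend to show that both $({}^ta,b)_*\gamma$ and $b \circ \gamma \circ a$ are equal to
$$(\pi_{X'Y'})_*\bigl(\pi_{X'X}^* a \cdot \pi_{XY}^* \gamma \cdot \pi_{YY'}^* b\bigr).$$
Once this is established on both sides, the lemma follows by inspection. The dimension count is easy to track: the three pullbacks have respective codimensions $\dim X' + \dim X - p$, $\dim X + \dim Y - r$, and $\dim Y + \dim Y' - q$ on $P$, so their product sits in the correct Chow group.

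For the left-hand side, I would unwind the definition $({}^ta,b)_*\gamma = (p_{X'Y'})_*\bigl(({}^ta,b) \cdot p_{XY}^*\gamma\bigr)$, the projections being taken from $X \times Y \times X' \times Y'$. Substituting $({}^ta,b) = \tau_*({}^ta \times b)$ and applying the projection formula to the isomorphism $\tau$, the expression is transported to a pushforward from $X \times X' \times Y \times Y'$, which agrees with $P$ after a harmless reordering of factors. The pullback $\tau^* p_{XY}^*\gamma$ becomes $\pi_{XY}^*\gamma$ because $p_{XY} \circ \tau$ is the projection onto the $X$- and $Y$-factors, while the external product ${}^ta \times b$ on $P$ decomposes as $\pi_{X'X}^* a \cdot \pi_{YY'}^* b$: indeed the transposition inherent in ${}^ta$ is absorbed by the fact that $\pi_{X'X}$ equals, up to the swap isomorphism of $X \times X'$, the projection used to define external product.

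For the right-hand side, I would proceed in two steps. First, $\gamma \circ a \in \CH_{p+r-\dim X}(X' \times Y)$ is by definition the pushforward along $X' \times X \times Y \to X' \times Y$ of $p_{X'X}^* a \cdot p_{XY}^*\gamma$. Using the cartesian square of flat projections
$$\xymatrix{ P \ar[r] \ar[d] & X' \times X \times Y \ar[d] \\ X' \times Y \times Y' \ar[r] & X' \times Y }$$
together with flat base change and the projection formula, the pullback of $\gamma \circ a$ to $X' \times Y \times Y'$ is realized as the pushforward from $P$ of $\pi_{X'X}^* a \cdot \pi_{XY}^*\gamma$. Intersecting further with $b$ lifted to $X' \times Y \times Y'$, pushing forward to $X' \times Y'$, and applying the projection formula one more time collapses the computation into the same triple intersection pushed forward from $P$.

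The only real difficulty is bookkeeping: one must keep track of the orderings of factors, the permutation $\tau$, and the transposition in ${}^ta$, and verify that every projection symbol really refers to the intended morphism. The core inputs are simply the projection formula and flat base change for Chow groups, both standard \cite[\S 1.7]{Fulton}; once the two calculations are organized with care, they terminate at the same expression.
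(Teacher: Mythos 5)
Your proposal is correct and takes a genuinely different route from the paper. You give the standard direct proof of Lieberman's lemma: expand the definitions of both sides and verify, via flat base change around the cartesian square of projections and repeated use of the projection formula, that they both collapse to the triple intersection $(\pi_{X'Y'})_*\bigl(\pi_{X'X}^* a \cdot \pi_{XY}^* \gamma \cdot \pi_{YY'}^* b\bigr)$ on the fourfold product. The paper instead argues by reduction: it cites Fulton 16.1.1 for the case when $a$ and $b$ are graphs of morphisms or their transposes, and then shows that an arbitrary correspondence $b=[W]$ can, after passing to an alteration $\widetilde W \to W$, be written (up to a scalar) as a composite $\Gamma \circ {}^t\Gamma'$ of such graph correspondences, with Manin's identity principle invoked to upgrade the equality of actions on cycles to an equality of correspondences. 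Your approach is more elementary and self-contained, relying only on flat base change and the projection formula; it avoids both alterations and Manin's identity principle, and in particular does not depend on any deep input such as de Jong's theorem. The paper's approach delegates the combinatorial bookkeeping to Fulton's treatment of the graph case at the cost of this heavier machinery. Both are valid; yours is the textbook route and arguably the cleaner one for a lemma of this sort.
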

\begin{proof}
  The lemma is proved in \cite[16.1.1]{Fulton} in the case when $a$
  and $b$ are either graphs of morphisms or the transpose thereof.  We
  reduce to this case by showing that every correspondence is the sum
  of the composite of graphs of morphisms and their transpose.


 If $X_1, X_2, X_3$ are varieties, let's write, for $i \in \{1,2\}$,
$p^{X_1X_2}_{X_{i}}$ for the projection $X_1\times X_2  \r X_{i}$; let's also
write, for $i \neq j \in \{1,2,3\}$, $p^{X_1X_2X_3}_{X_{i}X_j}$ for the
projection $X_1\times X_2\times X_3  \r X_{i}\times X_j$.

  Let's consider the case $X=X'$ and $b = [W]$, where $W$ is an
  irreducible subvariety of $Y \times Y'$ of dimension $q$. We are
  going to prove that $b$ is the composite of the graph of a morphism
  with the transpose of the graph of another morphism.  For this
  purpose, let's consider an alteration $\sigma : \widetilde{W} \r W$
  and let's define the composite morphism $h : \widetilde{W} \r W
  \hookrightarrow Y \times Y'$. Then, by proper pushforward, we have
  $\deg (\sigma) \cdot b = h_*[\widetilde{W}]$. Now,

  \begin{center}
    $\begin{array} {lcl} \deg (\sigma) \cdot b \circ \gamma & = &
      (p_{XY'})_*\big((p_{YY'}^*h_*[\widetilde{W}]) \cap
      p_{XY}^*\gamma \big)
      \\
      & = & (p_{XY'})_*\big((\id_X \times h)_*(p_{Y'}^{XY'})^*
      [\widetilde{W}] \cap
      p_{XY}^*\gamma \big) \\
      & = & (p_{XY'})_*\big((\id_X \times h)_*(\id_X \times h)^*
      p_{XY}^*\gamma \big)\\
      & = & (\id_X \times (p_{Y'}^{YY'} \circ h))_* (\id_X \times
      (p_Y^{YY'}
      \circ h))^* \gamma \\
      & = & (\Delta_X, \Gamma_{p_{Y'}^{YY'} \circ h})_* (\Delta_X,
      {}^t \Gamma_{p_{Y}^{YY'} \circ h})_* \gamma  \\
      & = & \Gamma_{p_{Y'}^{YY'} \circ h} \circ {}^t
      \Gamma_{p_{Y}^{YY'} \circ h} \circ \gamma.
  \end{array}$
  \end{center}
  Here, we have omitted the superscript ``$XYY'$''. The first equality
  is by definition of the composition law for correspondences. The
  second equality follows from the fibre square
  \begin{center}
    $\xymatrix{ X \times W \ar[d]_{p_W^{XW}} \ar[r]^{\id_X \times h \
        \ } & X \times Y \times Y' \ar[d]_{p_{YY'}} \\ W \ar[r]^h & Y
      \times Y'.}$
  \end{center}
  The third equality follows from the projection formula. The fourth
  equality follows from the equalities $p_{XY} \circ (\id_X \times h) =
  \id_X \times (p_Y^{YY'} \circ h)$ and $p_{XY'} \circ (\id_X \times h)
  = \id_{X} \times (p_{Y'}^{YY'} \circ h)$. The fifth equality is
  \cite[16.1.2.(c)]{Fulton}. Finally, the last equality follows from
  \cite[16.1.1]{Fulton}.

  This last equality holds for all smooth projective varieties $X$,
  all integers $r$ and all correspondences $\gamma \in \CH_r(X \times
  Y)$. By Manin's identity principle \cite[4.3.1]{An}, we get
  \begin{equation} 
\deg (\sigma) \cdot b =
    \Gamma_{p_{Y'}^{YY'} \circ h} \circ {}^t \Gamma_{p_{Y}^{YY'} \circ
      h}. \nonumber
  \end{equation}

  Thus, as claimed, every correspondence is the sum of the composite
  of graphs of morphisms and their transpose.
\end{proof}

\begin{lemma} \label{genericpoint} Let $X$ and $Y$ be smooth
  projective varieties over a field $k$. Let $\Gamma \in \CH_n(X \times
  Y)$ be a correspondence. Let $\eta_X$ be the generic point of $X$
  and let $[\eta_X] \in \CH_0(X_{k(X)})$ be the class of $\eta_X$
  viewed as a rational point of $X_{k(X)}$.  Then, under the natural
  map $\CH_n(X \times Y) \r \CH_{n-\dim X} (k(X) \times Y)$, $\Gamma$ is
  mapped to $(\Gamma_{k(X)})_*[\eta_X]$.
\end{lemma}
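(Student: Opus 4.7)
The plan is to deduce the identity directly from Lemma \ref{commute}, applied to a carefully chosen input in the top-left corner of the commutative square. Specifically, I would invoke Lemma \ref{commute} with the triple of varieties $(X, X, Y)$ playing the role of $(X, Y, Z)$ and with our $\Gamma \in \CH_n(X \times Y)$ playing the role of the correspondence $\Gamma$ in that lemma (so $q = n$ and $\dim Y_{\text{in Lemma}} = d := \dim X$). Taking the index $r = d$, the top-left group of the diagram becomes $\CH_d(X \times X)$, which contains $\Delta_X$; the bottom-left group becomes $\CH_n(X \times Y)$; and the bottom-right group becomes $\CH_{n-d}(k(X) \times Y)$, precisely the target of the natural map in question.

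Two elementary identifications then complete the proof. First, the natural pullback $\CH_d(X \times X) \r \CH_0(k(X) \times X)$ sends $\Delta_X$ to $[\eta_X]$: realizing this map as the direct limit, over open subsets $U \subseteq X$, of the flat pullbacks along $U \times X \r X \times X$, the intersection of $\Delta_X$ with $U \times X$ is the graph of the open immersion $U \hookrightarrow X$, and passing to the limit yields the graph of $\Spec k(X) \r X$, which is by definition $[\eta_X]$. Second, one must verify $(\Delta_X, \Gamma)_* \Delta_X = \Gamma$; this is immediate from Lemma \ref{BS-lemma} with $a = \Delta_X = {}^t\Delta_X$, $b = \Gamma$, and $\gamma = \Delta_X$, which yields $\Gamma \circ \Delta_X \circ \Delta_X = \Gamma$ (alternatively, a direct set-theoretic computation of the defining intersection on $X_1 \times X_2 \times X_3 \times Y$ gives the same answer).

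Having established these two ingredients, commutativity of the diagram in Lemma \ref{commute} reads: the image of $\Gamma = (\Delta_X, \Gamma)_* \Delta_X$ under the bottom horizontal map coincides with $(\Gamma_{k(X)})_*[\eta_X]$, which is exactly the asserted identity. There is no serious obstacle here; the entire content lies in choosing the right input $\Delta_X$ in the top-left corner of Lemma \ref{commute} and in interpreting the pullback of the diagonal at the generic point, both of which become transparent once Lemmas \ref{commute} and \ref{BS-lemma} are in place.
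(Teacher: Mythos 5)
Your proposal is correct and follows the paper's own proof exactly: both apply Lemma~\ref{commute} with $(X,X,Y)$ in the roles of $(X,Y,Z)$ and $r=\dim X$, identify the image of $\Delta_X$ under the top horizontal map as $[\eta_X]$ via the direct-limit description of flat pullback, and use Lemma~\ref{BS-lemma} to compute $(\Delta_X,\Gamma)_*\Delta_X = \Gamma$. The only cosmetic difference is that you retain the redundant factor $\Delta_X\circ\Delta_X$, which the paper silently collapses.
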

\begin{proof} Let $d:=\dim X$.  Since the map $\CH_{d}(X \times X) \r
  \CH_{0}(k(X) \times X)$ is obtained as the direct limit, ranging
  over the open subsets $U$ of $X$, of the flat pullback maps
  $\CH_{d}(X \times X) \r \CH_{0}(U \times X)$, we see that
  $[\Delta_X] \in \CH_d(X \times X)$ is mapped to $[\eta_X] \in
  \CH_{0}(k(X) \times X)$. Besides, by Lemma \ref{BS-lemma},
  $(\Delta_X, \Gamma)_* \Delta_X = \Gamma \circ \Delta_X =
  \Gamma$. The lemma then follows by commutativity, for all integer
  $r$, of the diagram
    \begin{center}
      $ \xymatrix{\CH_{r}(X \times X) \ar[d]^{(\Delta_X, \Gamma)_*}
        \ar[r] & \CH_{r-d}(k(X) \times X) \ar[d]^{(\Gamma_{k(X)})_*}
        \ar[r] &
        0 \\
        \CH_{n+r-d}(X \times Y) \ar[r] & \CH_{n+r-2d}(k(X) \times Y)
        \ar[r] & 0.}$
    \end{center} Let us prove commutativity of the diagram. It is
    obtained as the direct limit over the open inclusions $j_U : U
    \hookrightarrow X$ of the diagrams
    \begin{center} $ \xymatrix{\CH_{r}(X \times X) \ar[d]^{(\Delta_X,
          \Gamma)_*} \ar[rr]^{({}^t\Gamma_{j_U},\Delta_X)_*}& &
        \CH_{r}(U
        \times X) \ar[d]^{(\Delta_U,\Gamma)_*} \\
        \CH_{n+r-d}(X \times Y)
        \ar[rr]^{({}^t\Gamma_{j_U},\Delta_Y)_*} & & \CH_{n+r-d}(U
        \times Y).}$
    \end{center} The action of $(\Delta_U,\Gamma)$ on $ \CH_{r}(U
    \times X)$ is well-defined because $(\Delta_U,\Gamma)$ has a
    representative whose support is proper over $U \times Y$,
    cf. \cite[Remark 16.1]{Fulton}. These diagrams commute for all $U$
    for the following reason. Let $U$, $V$ and $W$ be nonsingular open
    varieties and let $\alpha \in \CH_i(U \times V)$ (resp. $\beta \in
    \CH_j(V \times W)$) be a correspondence which has a representative
    which is proper over $U$ and $V$ (resp. $V$ and $W$). Then, by
    loc.\ cit.\ , it is possible to define the composite $\beta
    \circ \alpha$ and to show as in \cite[Proposition
    16.1.2(a)]{Fulton} that $(\beta \circ \alpha)_* = \beta_* \circ
    \alpha_*$ on cycles. We may now conclude that the diagram is
    commutative by checking that $(\Delta_U,\Gamma) \circ
    ({}^t\Gamma_{j_U},\Delta_X) = ({}^t\Gamma_{j_U},\Delta_Y) \circ
    (\Delta_X, \Gamma) = ({}^t\Gamma_{j_U},\Gamma)$.
\end{proof}

\begin{proposition} \label{BS} Let $M =(X,p)$ and $N = (Y,q,n)$ be two
  motives over a field $k$ and let $\varphi : N \r M$ be a morphism.
  Suppose that $(\varphi_{k(X)})_* : \CH_0(N_{k(X)}) \r \CH_0(M_{k(X)})$
  is surjective. Then there exist a morphism $\Gamma_1 : M \r N$, a
  smooth projective variety $Z$ of dimension $\dim X -1$ over $k$ and
  a morphism $\Gamma_2 : M \r M$ that factors through the motive $(Z,
  \Delta_Z, 1)$ such that $$p = \varphi \circ \Gamma_1 + \Gamma_2.$$
\end{proposition}
\begin{proof}
   As in the proof of Lemma \ref{genericpoint}, we have the following
  commutative diagram whose rows are exact by localization.
  \begin{center}
    $ \xymatrix{\CH_{d-n}(X \times Y) \ar[d]^{(\Delta_X, \varphi)_*}
      \ar[r] & \CH_{-n}(k(X) \times Y) \ar[d]^{(\varphi_{k(X)})_*}
      \ar[r] &
      0 \\
      \CH_d(X \times X) \ar[r] & \CH_0(k(X) \times X) \ar[r] & 0.}$
  \end{center}
  Let $\eta_X$ be the generic point of $X$ and view it as a
  rational point of $k(X) \times X$ over $k(X)$. Then $p \in \CH_d(X
  \times X)$ maps to $(p_{k(X)})_*[\eta_X] \in \CH_0(k(X) \times X)$
  by Lemma \ref{genericpoint}.  Because $(\varphi_{k(X)})_* :
  \CH_0(N_{k(X)}) \r \CH_0(M_{k(X)})$ is surjective, there exists $y
  \in \CH_{-n}(k(X) \times Y)$ such that $(\varphi_{k(X)})_*y =
  (p_{k(X)})_*[\eta_X]$. Let $\alpha$ be a lift of $y$ in $\CH_{d-n}(X
  \times Y)$. By commutativity of the diagram, we have that $\Gamma_2
  := p - (\Delta_X, \varphi)_*\alpha$ maps to zero in $\CH_0(k(X)
  \times X)$.  Therefore, by the localization exact sequence for Chow
  groups, $\Gamma_2$ is supported on $D \times X$ for some
  codimension-one closed subscheme $D$ of $X$, i.e., there is a $\beta
  \in \CH_d(D \times X)$ that maps to $\Gamma_2$. Let's write $\iota :
  D \hookrightarrow X$ for the inclusion map. Consider then $\sigma : Z \r D$ 
  an alteration of $D$, that is, $\sigma$ is a generically finite morphism with
$Z$ smooth. Such a morphism exists for any variety $D$ over $k$ by de Jong's
alteration theorem. Using the alteration $\sigma : Z \r D$, we see that there is
a cycle $\gamma
  \in \CH_d(Z \times X)$ that maps to $\Gamma_2$ under the natural map
  $\CH_d(Z \times X) \r \CH_d(X \times X)$. By Lemma \ref{BS-lemma},
  we then have $\Gamma_2 = ((\iota \circ \sigma) \times \id_X)_*
  \gamma = \Gamma_{\iota \circ \sigma} \circ \gamma$, so that
  $\Gamma_2$ factors through $(Z,\Delta_Z,1)$. By Lemma \ref{BS-lemma}
  again, we have $(\Delta_X, \varphi)_*\alpha = \varphi \circ \alpha$,
  so that if we set $\Gamma_1 := q \circ \alpha \circ p$, then we have
  $p = \varphi \circ \Gamma_1 + \Gamma_2.$
\end{proof}

\subsection{Three lemmas} \label{splitting-app} The following three
lemmas are the building blocks to the proof of  Theorem
\ref{factor-gen} which is the main theorem of Section \ref{S:3}.\medskip

Given a motive $M$, we denote $\overline{M}$ its reduction modulo
numerical equivalence. Recall the following.  Let $f : N \r M$ be a
morphism of motives and let $\overline{f} : \overline{N} \r
\overline{M}$ be its reduction modulo numerical equivalence. If $N$ is
finite-dimensional and if $\overline{f}$ has a left-inverse, then $f$
has a left-inverse. Indeed, consider a morphism $h : M \r N$ such that
$\overline{h} \circ \overline{f} = \id_{\overline{N}}$. By
\cite[Proposition 7.5]{Kimura}, $h \circ f - \id_N \in \End(N)$ is
nilpotent. It is then clear that $f$ has a left-inverse. A similar
statement holds if $M$ is finite-dimensional and if ``left-inverse''
is replaced with ``right-inverse''.

\begin{lemma} \label{split} Let $f : N \r M$ be a morphism of motives
  defined over a field $k$.  Assume that $N$ is finite-dimensional in
  the sense of Definition \ref{d:fdck}. Then $M$ splits as $M_1 \oplus M_2$,
where the
  induced morphism $N \r M_1$ has a right-inverse and where the
  induced morphism $N \r M_2$ is numerically trivial.
\end{lemma}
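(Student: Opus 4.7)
The plan is to construct the splitting of $M$ by first producing the corresponding splitting in the category of numerical motives, where it is automatic, and then lifting back to Chow motives using the finite-dimensionality of $N$.

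First, by Jannsen's semi-simplicity theorem, the category of numerical motives is semi-simple abelian, so the kernel of $\overline{f}: \overline{N} \to \overline{M}$ is a direct summand of $\overline{N}$. I pick an idempotent $\overline{q} \in \End(\overline{N})$ with $\ker \overline{q} = \ker \overline{f}$, so that $\overline{f}\,\overline{q} = \overline{f}$ and the restriction $\overline{\varphi} := \overline{f}|_{\overline{N}_1}: \overline{N}_1 \to \overline{M}$ is a split monomorphism, where $\overline{N}_1 := \im \overline{q}$. Since $N$ is finite-dimensional, Kimura's theorem (quoted in the introduction) implies that $\ker(\End(N) \to \End(\overline{N}))$ is a nilpotent ideal, so the idempotent $\overline{q}$ lifts to an idempotent $q \in \End(N)$. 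This yields a splitting $N = N_1 \oplus N_2$ with $N_1 := (N,q)$ still finite-dimensional, because finite-dimensionality is stable under direct summands. Let $\iota_1 : N_1 \to N$ be the inclusion and set $\varphi := f \circ \iota_1 : N_1 \to M$. Since $\overline{\varphi}$ is a split mono and $N_1$ is finite-dimensional, Lemma~\ref{inverse} produces a Chow-level left-inverse $\psi : M \to N_1$ with $\psi \circ \varphi = \id_{N_1}$.

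I then set $e := \varphi \circ \psi \in \End(M)$, which is idempotent because $\psi \circ \varphi = \id_{N_1}$, and define $M_1 := (M, e)$ and $M_2 := (M, 1-e)$. A direct check gives that $\iota_1 \circ \psi : M_1 \to N$ is a right-inverse to the induced map $e \circ f : N \to M_1$, since $e f \iota_1 \psi = e \varphi \psi = e^2 = e = \id_{M_1}$. The induced map $(1-e) \circ f : N \to M_2$ is numerically trivial, because numerically $\overline{f}$ factors as $\overline{\varphi} \circ \overline{\pi_1}$, where $\overline{\pi_1}$ denotes the projection $\overline{N} \to \overline{N_1}$ induced by $\overline{q}$, so that $\overline{e}\,\overline{f} = \overline{\varphi}\,\overline{\psi}\,\overline{\varphi}\,\overline{\pi_1} = \overline{\varphi}\,\overline{\pi_1} = \overline{f}$.

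The main obstacle, as always with this kind of descent from numerical to rational equivalence, is the two lifts of data in $\End(\overline{N})$ and $\Hom(\overline{M},\overline{N_1})$ to the Chow category. Both the lift of the idempotent $\overline{q}$ to $q$ and the production of the left-inverse $\psi$ lifting $\overline{\psi}$ rely essentially on the finite-dimensionality of $N$ (and hence of $N_1$); every other step is formal manipulation of idempotents.
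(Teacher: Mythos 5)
Your proof is correct and takes essentially the same route as the paper's: apply Jannsen semi-simplicity to split $\overline{N}$ so that $\overline{f}$ restricts to a split monomorphism on a direct summand, lift the idempotent to $N$ via finite-dimensionality, use Lemma~\ref{inverse} to produce a Chow-level left-inverse, and read off the splitting of $M$ from the resulting idempotent $e = \varphi\circ\psi$. Your verification of the two required properties is a bit more explicit than the paper's, but the structure of the argument is identical.
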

\begin{proof}
  The morphism $f : N \r M$ reduces modulo numerical equivalence to a
  morphism $\overline{f} : \overline{N} \r \overline{M}$. By Jannsen's
  semi-simplicity Theorem \cite{Jannsen3}, $\overline{N}$ admits a
  splitting $\overline{N}_1 \oplus \overline{N}_2$ and $\overline{M}$
  admits a splitting $\im \overline{f} \oplus \overline{M}'$ such that
  $\overline{f}$ induces an isomorphism $\overline{N}_1 \r \im
  \overline{f}$ and such that $\overline{N}_2 \r \overline{M}$ and
  $\overline{N} \r \overline{M}'$ are zero. By finite-dimensionality
  of $N$, $\overline{N}_1$ lifts to a direct summand $N_1$ of $N$. Let
  $s : N_1 \r M$ be the restriction of $f$ to $N_1$ and let
  $\overline{r} : \overline{M} \r \overline{N}_1$ be a left-inverse to
  $\overline{s}: \overline{N}_1 \r \overline{M}$.  By
  finite-dimensionality of $N_1$, we find a morphism
  $r : M \r N_1$ which is a left-inverse to $s : N_1 \r M$.  The
  idempotent $s \circ r : M \r M$ thus defines a direct summand $M_1$
  of $M$ which is isomorphic to $N_1$ and it is clear that the induced
  morphism $N \r M_1$ has a right-inverse. We can therefore write $M =
  M_1 \oplus M_2$, where $M_2$ is defined by the idempotent
  $\id_M-s\circ r$. That the induced morphism $N \r M_2$ is
  numerically trivial follows from the facts that $\overline{f}$ is
  zero on $\overline{N}_1$ and that $\overline{r} \circ \overline{s} =
  \id_{\overline{N}_1}$.
\end{proof}

For convenience, for a motive $M$, when we say that there
exist a smooth projective variety $Z$ of dimension at most a negative
integer over $k$ and an idempotent $q \in \End(\h(Z))$ such that $M
\cong (Z,q,l)$, we mean that $M=0$.

\begin{lemma} \label{induction} Let $M=(X,p,n)$ be a motive over $k$
  and let $l$ be an integer greater than $n$ such that
  $\CH_i(M_{\Omega})=0$ for all $i <l$. Then there exist a smooth
  projective variety $Z$ of dimension at most $\dim X -l +n$ over $k$
  and an idempotent $q \in \End(\h(Z))$ such that $M \cong (Z,q,l)$.
\end{lemma}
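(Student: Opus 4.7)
The plan is to argue by induction on $m := l - n \geq 1$, invoking Theorem \ref{BS} at each step to lower by one the dimension of the variety underlying a representation of $M$. The induction rests on the following consequence $(\star)$ of Theorem \ref{BS}: if $M' = (Y, q')$ is a motive over $k$ with $q'_* \CH_0(Y_\Omega) = 0$, then $M' \simeq (Z, e, 1)$ for some smooth projective variety $Z$ over $k$ of dimension at most $\dim Y - 1$ and some idempotent $e \in \End(\h(Z))$. To establish $(\star)$, I fix a $k$-embedding $k(Y) \hookrightarrow \Omega$ (which exists by the universal-domain hypothesis); the injectivity part of Lemma \ref{bc} then yields $\CH_0((Y,q')_{k(Y)}) \hookrightarrow \CH_0((Y,q')_\Omega) = 0$. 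I apply Theorem \ref{BS} to $M'$ with the trivial motive $N = 0$ and $\varphi = 0$ (so the surjectivity hypothesis is vacuous): the conclusion reads $q' = a \circ b$ where the factorization passes through $(Z, \Delta_Z, 1)$ with $\dim Z = \dim Y - 1$. A routine computation shows that $e := b \circ q' \circ a$ is idempotent, and the morphisms $e \circ b \circ q'$ and $q' \circ a \circ e$ exhibit mutually inverse isomorphisms between $(Y, q')$ and $(Z, e, 1)$.

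The base case $m = 1$ applies $(\star)$ directly to $(X, p)$: the hypothesis gives $p_* \CH_0(X_\Omega) = \CH_n(M_\Omega) = 0$, so $(X, p) \simeq (Z, e, 1)$ with $\dim Z \leq \dim X - 1$, and twisting by $n$ gives $M \simeq (Z, e, n + 1) = (Z, e, l)$ with $\dim Z \leq \dim X - l + n$. For the inductive step $m \geq 2$, I apply the statement for $l - 1$ to obtain $M \simeq (Y, q', l - 1)$ with $\dim Y \leq \dim X - (l-1) + n$; specialising the hypothesis at $i = l - 1$ yields $q'_* \CH_0(Y_\Omega) = \CH_{l-1}(M_\Omega) = 0$, and $(\star)$ applied to $(Y, q')$ then gives $(Y, q') \simeq (Z, e, 1)$ with $\dim Z \leq \dim Y - 1 \leq \dim X - l + n$. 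Twisting by $l - 1$ finishes the step.

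The degenerate case $\dim X - l + n < 0$ is accounted for as follows: the induction eventually produces a representation $(Y, q', l')$ with $\dim Y = 0$, and the vanishing $q'_* \CH_0(Y_\Omega) = 0$ on zero-cycles of an Artin motive forces $q' = 0$ (the action of correspondences on $\CH_0$ is faithful for Artin motives over an algebraically closed field, from which faithfulness over $k$ follows by base-change injectivity), so $M = 0$ in accordance with the convention stated just before the lemma. The one real subtlety in the whole argument is the translation from Chow-group vanishing over $\Omega$ to Chow-group vanishing over the function field $k(Y)$ that is needed to invoke Theorem \ref{BS}; this is handled uniformly by the injectivity in Lemma \ref{bc}. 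Beyond this the proof is a mechanical iteration of the Bloch--Srinivas decomposition-of-the-diagonal argument together with the standard completion of an idempotent through a factorization.
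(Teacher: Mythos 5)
Your proof is correct and follows the paper's strategy: induct (on $l-n$, which after normalizing $n=0$ is the paper's induction on $l$), apply Theorem \ref{BS} with the zero morphism to factor $p$ through a variety of one less dimension, and use Lemma \ref{bc} to transfer the vanishing from $\Omega$ to the function field $k(Y)$. You make explicit two steps the paper leaves to the reader — the idempotent-completion trick turning the factorization of $p$ through $(Z,\Delta_Z,1)$ into an isomorphism $M\simeq(Z,e,1)$, and the Artin-motive degenerate case — but these are routine and the underlying argument is the same.
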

\begin{proof} This is due to Kahn--Sujatha \cite{KahnSujatha}. Let's
  however give a proof. Clearly, we may assume $n=0$. Let's proceed by
  induction on $l$. If $l\leq 0$, there is nothing to prove.  Assume
  that $\CH_0(M_{\Omega}) = 0$ and hence $\CH_0(M_{k(X)}) = 0$. Then,
  by Proposition \ref{BS}, there is a smooth projective variety $W$ of
  dimension $\dim X - 1$ and an idempotent $r$ such that $M$ is
  isomorphic to $(Z,r,1)$.  Assume now that $\CH_i(M_{\Omega})=0$ for
  all $i <l$ (and hence that $\CH_i(M_{k(X)})=0$ for all $i <l$) and
  that there is a smooth projective variety $Y$ of dimension $\dim X -
  l +1$ and an idempotent $q_Y$ such that $M$ is isomorphic to
  $(Y,q_{Y},l-1)$. The motive $(Y,q_{Y},l-1)$ is such that
  $\CH_0((Y,q_{Y})_\Omega) = 0$ and hence $\CH_0((Y,q_{Y})_{k(Y)}) =
  0$. The case $l=1$ has been settled above, and we may thus conclude
  to the existence of a smooth projective variety $Z$ of dimension at
  most $\dim X -l$ and of an idempotent $q$ such that $M \cong
  (Z,q,l)$.
\end{proof}

\begin{remark} \label{effective}
  Lemma \ref{induction} gives a necessary and sufficient condition for
  a motive $M$ over $k$ to be effective, namely that
  $\CH_l(M_\Omega)=0$ for all $l <0$. Indeed, after choosing an
  embedding $k(X) \hookrightarrow \Omega$, we see that
  $\CH_l(M_\Omega)=0$ for all $l <0$ implies that $\CH_l(M_{k(X)})=0$
  for all $l <0$. It then follows from Lemma \ref{induction} that $M$
  is effective. Conversely, if $M$ is effective, then $M_\Omega$ is
  also effective. It is then clear that $\CH_l(M_\Omega)=0$ for all $l
  <0$.
\end{remark}

\begin{remark}
  Further to Remark \ref{effective}, Lemma \ref{induction} gives a
  necessary and sufficient condition for a motive $M$ over $k$ to be
  isomorphic to a direct summand of a twisted motive of a variety of
  dimension at most $d$. Given a motive $M$ over $k$, there exist a
  smooth projective variety $X$ of dimension at most $d$ and an
  integer $l$ such that $M$ is isomorphic to a direct summand of
  $\h(X)(l)$ if and only if there exist integers $l$ and $l'$ with
  $-l-l'=d$ such that $\CH_i(M_\Omega)=0$ for all $i<l$ and
  $\CH_j(M^\vee_\Omega)=0$ for all $j<l'$. The condition is indeed
  clearly sufficient and it is necessary by the following. That
  $\CH_i(M_\Omega)=0$ for all $i<l$ implies that $M$ is isomorphic to
  a motive of the form $(Y,q,l)$ for some smooth projective variety
  $Y$ over $k$. That $\CH_j(M^\vee_\Omega)= \CH_{j+l+\dim
    Y}(Y_\Omega,q_\Omega) = 0$ for all $j<l'$ implies that $M^\vee$ is
  isomorphic to a motive of the form $(X,p,l+l'+\dim Y)$ for some
  smooth projective variety $X$ of dimension at most $-l-l'$ over $k$
  and some idempotent $p \in \End(\h(X))$.  Dualizing this isomorphism gives $M
\cong
  (X,{}^tp,l)$.
\end{remark}

\begin{lemma} \label{prekey} Let $f : N \r M$ be a morphism of motives
  defined over a field $k$ with $M=(X,p)$.  Assume that $f$ is
  numerically trivial and that $N$ is finite-dimensional in the sense
  of Definition \ref{d:fdck}. If $(f_{k(X)})_* : \CH_0(N_{k(X)}) \r
\CH_0(M_{k(X)})$ is
  surjective, then $\CH_0(M_K) = 0$ for all field extensions $K/k$.
\end{lemma}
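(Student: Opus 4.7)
The plan is to show that $M$ is a direct summand of a finite sum of copies of $\h(Z)(1)$ for some smooth projective variety $Z$; since $\CH_0(\h(Z)(1)) = \CH_{-1}(Z) = 0$ over any field, this forces $\CH_0(M_K) = 0$ for every extension $K/k$.

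First, the surjectivity hypothesis on $(f_{k(X)})_*$ at the level of $\CH_0$ is exactly what is needed to apply Theorem \ref{BS} to the morphism $f : N \to M$. This produces $\Gamma_1 : M \to N$, a smooth projective variety $Z$ with $\dim Z = \dim X - 1$, and a morphism $\Gamma_2 : M \to M$ factoring through $\h(Z)(1) := (Z,\Delta_Z,1)$, such that in $\End(M)$ one has
\begin{equation*}
\id_M \;=\; f \circ \Gamma_1 \;+\; \Gamma_2.
\end{equation*}

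Next, I would combine numerical triviality of $f$ with finite-dimensionality of $N$ to show that $e := f \circ \Gamma_1 \in \End(M)$ is nilpotent. Numerical equivalence is stable under composition, so $\Gamma_1 \circ f \in \End(N)$ is numerically trivial; Kimura's nilpotency theorem \cite[Proposition 7.5]{Kimura}, applicable since $N$ is finite-dimensional, then yields an integer $r$ with $(\Gamma_1 \circ f)^r = 0$. Consequently $e^{r+1} = f \circ (\Gamma_1 \circ f)^r \circ \Gamma_1 = 0$.

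Setting $m = r+1$ and using $\id_M^m = \id_M$, I would then expand
\begin{equation*}
\id_M \;=\; (e + \Gamma_2)^m
\end{equation*}
as a sum of words of length $m$ in the non-commuting letters $e$ and $\Gamma_2$. The lone word containing no $\Gamma_2$ is $e^m = 0$, while every other word has at least one occurrence of $\Gamma_2$ and so factors through $\h(Z)(1)$. Packaging these factorizations together, $\id_M$ factors through a finite direct sum $\h(Z)(1)^{\oplus s}$, exhibiting $M$ as a direct summand of $\h(Z)(1)^{\oplus s}$. For any field extension $K/k$, $\CH_0(M_K)$ is then a direct summand of $\CH_0\bigl(\h(Z_K)(1)^{\oplus s}\bigr) = \CH_{-1}(Z_K)^{\oplus s} = 0$. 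The main obstacle, which Theorem \ref{BS} together with Kimura nilpotency are jointly designed to address, is to bridge from a generic-point $\CH_0$-surjectivity to an honest factorization of $\id_M$ through a motive whose $\CH_0$ is already known to vanish.
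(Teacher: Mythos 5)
Your proof is correct, and the two key ingredients (the Bloch--Srinivas decomposition $\id_M = f\circ\Gamma_1 + \Gamma_2$ from Theorem \ref{BS}, and Kimura nilpotency of $\Gamma_1\circ f$) are exactly the ones the paper uses. The difference is in the final packaging. The paper restricts immediately to $\CH_0(X)$, where $\Gamma_2$ acts as zero, then uses the idempotence $p_* = p_*^n$ to deduce $p_* = ((f\circ\Gamma_1)^{\circ n})_*$ on $\CH_0(X)$ for every $n$, and kills this with nilpotency; base-change over $K$ is handled by a one-line remark that the argument works verbatim over $K$. You instead stay at the level of morphisms of motives, expand $\id_M = (e+\Gamma_2)^m$ with $e^m=0$, and conclude that $\id_M$ factors through $\h(Z)(1)^{\oplus s}$, so that $M$ is a direct summand of a twist of $\h(Z)$. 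Your version buys a strictly stronger and cleaner conclusion --- it shows $M(-1)$ is effective, which is what the paper has to extract afterwards by combining Lemma \ref{prekey} with Lemma \ref{induction} --- and it makes the ``for all field extensions $K/k$'' part automatic rather than a remark. The paper's version is marginally shorter and stays closer to the literal statement being proved. Either is a perfectly good write-up; yours in fact anticipates the form in which the lemma is actually used downstream.
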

\begin{proof}
  By Proposition \ref{BS}, we get the existence of $\Gamma_1 \in
  \Hom(M,N)$, and of $\Gamma_2 \in \End(\h(X))$ which factors through
  $(Z,\Delta_Z,1)$ for some variety $Z$, such that $p = \gamma \circ
  \Gamma_1 + \Gamma_2$. The arguments below work equally well after
  base change to $K$ and, without loss of generality, we therefore
  assume $K=k$.  The action of $\Gamma_2$ on $\CH_0(X)$ factors
  through $\CH_0(Z,\Delta_Z,1) = \CH_{-1}(Z) =0$, so that $\Gamma_2$
  acts trivially on $\CH_0(X)$. Therefore, $p_*x = (\gamma \circ
  \Gamma_1)_*x$ for all $x \in \CH_0(X)$. Since $p$ is an idempotent,
  we also get $p_*x = (\gamma \circ \Gamma_1)^{\circ n}_*x$ for all
  positive integers $n$ and all $x \in \CH_0(X)$. By assumption,
  $\gamma$ is numerically trivial.  Hence, $\Gamma_1 \circ \gamma \in
  \End(N)$ is numerically trivial.  Now, $N$ is finite-dimensional and
  \cite[Proposition 7.5]{Kimura} yields that $\Gamma_1 \circ \gamma$
  is nilpotent.  We have thus proved that $p_*x = 0$ for all $x \in
  \CH_0(X)$, i.e.,  that $\CH_0(M) = 0$.
\end{proof}

\subsection{Proof of the main theorem} \label{S:FD}

Let $X$ be a smooth projective variety over $k$.  Two $0$-cycles
$\alpha$ and $\beta$ on $X$ are said to be \emph{albanese equivalent}
if $\alpha - \beta$ has degree zero and lies in the kernel of the
albanese map $\CH_0(X) \r \mathrm{Alb}(X)(k) \otimes \Q$. From now on,
$\sim$ denotes an adequate equivalence relation on algebraic cycles
which is, when restricted to $0$-cycles, coarser than albanese
equivalence on $0$-cycles.  For instance, $\sim$ could be homological
equivalence, algebraic equivalence, smash-nilpotent equivalence,
Abel-Jacobi equivalence (when $k=\C$) or any intersection thereof. By
Remark \ref{remarkD} applied to $\sim$ instead of homological
equivalence, given a motive $M$, we may consider cycles that are $\sim
0$ on $M$ and the notation $\CH_i(M)_\sim$ is unambiguous.

\begin{theorem} \label{factor-gen} Let $f : N \r M = (X,p,n)$ be a
  morphism of motives over $k$. Assume that \begin{itemize}
  \item there exists an integer $l$ such that the induced maps
    $(f_\Omega)_* : \CH_i(N_\Omega)_{\sim} \r \CH_i(M_\Omega)_\sim$
    are surjective for all $i < l$;
\item $N$ is finite-dimensional.
\end{itemize}
 Then $M$ splits as $Q
  \oplus R(l)$, where 
  \begin{itemize}
\item  $Q$ is isomorphic to a direct summand of $N
  \oplus \bigoplus_{i=n}^{l-1}\h(C_i)(i)$ for some curves $C_n,\ldots
  , C_{l-1}$;
\item $R$ is isomorphic to a direct summand of $\h(Z)$ for some smooth
  projective variety $Z$ of dimension at most $ \dim X - l + n$ over
  $k$.
  \end{itemize}
  If, in addition $M \cong M^\vee(d)$ for some $d$ (e.g.\ $M=\h(X)$
  with $d=\dim X$), then $M$ splits as $Q' \oplus R'(l)$, where $Q'$
  is isomorphic to a direct summand of $N \oplus N^\vee(d) \oplus
  \bigoplus_i \h(C_i)(i)$ for some curves $C_i$, and where $R'$ is
  isomorphic to a direct summand of $\h(Z')$ for some smooth
  projective variety $Z'$ of dimension at most $d - 2l$ over $k$.
\end{theorem}

\begin{proof}
  Up to working with each irreducible component of $X$ separately, we
  may assume that $X$ is irreducible. Up to replacing $k$ with a field
  of definition of $f : N \r M$ which is finitely generated, we may
  assume that $k$ is finitely generated and that $\Omega$ is not only
  a universal domain but also a universal domain over $k$. Let then
  $k(X)$ be the function field of $X$ and pick an embedding $k(X)
  \subset \Omega$ which extends that of $k$.  Let us write $M=M_1
  \oplus M_2$ as in Lemma \ref{split}, with respect to the morphism $f
  : N \r M$, so that $M_1$ is isomorphic to a direct summand of $N$
  and the composite morphism $N \r M \r M_2$ is numerically
  trivial. This latter morphism induces, after base-change to
  $\Omega$, surjective maps $\CH_i(N_\Omega)_\sim \r
  \CH_i((M_2)_{\Omega})_\sim$ for all $i<l$.  Thus, up to replacing
  $M$ with $M_2$, we need only show, provided that $f$ is numerically
  trivial, that $M$ splits as $Q \oplus R(l)$ as in the theorem with
  $Q$ isomorphic to a direct summand of
  $\bigoplus_{i=n}^{l-1}\h(C_i)(i)$ for some curves $C_n,\ldots ,
  C_{l-1}$.
  
  We proceed by induction on $\dim X$.  If $l \leq n$, there is
  nothing to prove. Up to twisting, we can assume that $n=0$, and that
  $l>0$. We thus have a surjection $\CH_0(N_\Omega)_\sim \r
  \CH_0(M_\Omega)_\sim = \CH_0(X_\Omega,p_\Omega)_\sim$.  By Bertini,
  let $\iota : C \r X$ be a smooth linear section of dimension $1$ of
  $X$. By the Lefschetz hyperplane theorem, the induced map
  $\mathrm{Alb}_{C} \r \mathrm{Alb}_X$ is surjective. By functoriality
  of the albanese map, we find that $h:=f \oplus g : N \oplus \h(C) \r
  M$ induces a surjective map $(f_\Omega \oplus g_\Omega)_* :
  \CH_0(N_\Omega \oplus \h(C_{\Omega})) \r \CH_0(M_\Omega)$, where
  $g:= p \circ \Gamma_\iota : \h(C) \r M$.  Lemma \ref{lemma2} then
  implies that $(h_{k(X)})_* : \CH_0(N_{k(X)} \oplus \h(C_{k(X)})) \r
  \CH_0(M_{k(X)})$ is surjective.
  
  Let us now write $M=M' \oplus M''$ as in Lemma \ref{split}, with
  respect to the morphism $g : \h(C) \r M$, so that $M'$ is isomorphic
  to a direct summand of $\h(C)$ and the composite morphism $f \oplus
  g : N \oplus \h(C) \r M \r M''$ is numerically trivial. This latter
  morphism induces, after base-change to $k(X)$, a surjective map
  $\CH_0(N_{k(X)}\oplus \h(C_{k(X)}) ) \r \CH_0(M''_{k(X)})$.  Lemma
  \ref{prekey} implies that $\CH_0(M''_{k(X)}) = 0$.  We then deduce,
  thanks to Lemma \ref{induction}, that there exist a smooth
  projective variety $Z$ of dimension at most $\dim X - 1$ over $k$
  and an idempotent $q \in \End(\h(Z))$ such that $M'' \cong
  (Z,q,1)$. Now the motive $(Z,q)$ is such that there is a numerically
  trivial morphism $N(-1) \r (Z,q)$ inducing surjective maps
  $(f_\Omega)_* : \CH_i(N(-1)_\Omega)_{\sim} \r
  \CH_i((Z,q)_\Omega)_\sim$ for all $i < l-1$. We can thus conclude by
  the induction hypothesis that $M'' = (Z,q,1)$ splits as $P \oplus
  R(l)$, where $P$ is isomorphic to a direct summand of
  $\bigoplus_{i=1}^{l-1}\h(C_i)(i)$ for some curves $C_1,\ldots ,
  C_{l-1}$ and where $R$ is isomorphic to a direct summand of the
  motive of a smooth projective variety of dimension at most $\dim X
  -l$.  \medskip
  
  Assume now that there is an isomorphism $M \cong M^\vee(d)$. Let $M
  = Q \oplus R(l)$ be a decomposition as in the first part of the
  theorem with $R = (Z,r)$. The isomorphism $M \cong M^\vee(d)$ gives
  a morphism $N \r M \cong M^\vee(d) \r R^\vee(d) \cong
  (Z,{}^tr,d-l-\dim Z)$ which satisfies the assumptions of the first
  part of the theorem. Thus, there exist a direct summand $S$ of $N
  \oplus \bigoplus_j \h(D_j)(j)$ for some curves $D_j$, a smooth
  projective variety $Z'$ of dimension at most $ \dim Z - l +
  (d-l-\dim Z) = d-2l$ over $k$ and an idempotent $q \in
  \End(\h(Z'))$ such that the motive $R^\vee(d)$ splits as $S \oplus
  (Z',q,l)$.  Let us then define $Q' := Q \oplus S^\vee(d)$. This is a
  direct summand of $N \oplus N^\vee(d) \oplus \h(C_i)(i)$ for some
  curves $C_i$ and $M$ is isomorphic to $Q' \oplus (Z',{}^tq,l)$ with
  $\dim Z' \leq d-2l$.
  \end{proof}

  \begin{proof}[Proof of Theorems \ref{FD} and \ref{FD2}]
    If $M$ is of abelian type over $\Omega$, then a twist of $M$ is
    isomorphic to a direct summand of the motive of a product of
    curves $\prod C_i$.  Therefore, clearly, $\CH_*(M)_\sim$ is
    spanned by $\CH_*\big(\prod C_i\big)_\sim$ for any adequate
    equivalence relation. Conversely, assume that there is a
    finite-dimensional motive $N$ over $\Omega$ (e.g.\ the motive of a
    product of curves $C_i$ over $\Omega$) and a correspondence $f : N
    \r M$ such that $f_* : \CH_*(N)_\sim \r \CH_*(M)_\sim$ is
    surjective, then Theorem \ref{factor-gen} shows that $M$ is
    isomorphic to a direct summand of $N \oplus
    \bigoplus_{j}\h(D_j)(j)$ for some curves $D_j$, so that if $N$ is
    the motive of a product of curves then $M$ is of abelian type. As
    for the proof of Theorem \ref{FD2}, let $X$ be a smooth projective
    variety of dimension $d=2n$ or $2n+1$ over $\Omega$ and let $f : N
    \r \h(X)$ be a morphism inducing surjections $(f_\Omega)_* :
    \CH_i(N_\Omega)_{\sim} \r \CH_i(X_\Omega)_\sim$ for all $i \leq
    n-1$. Then, since $\h(X) \cong \h (X)^\vee(d)$, by Theorem
    \ref{factor-gen} $\h (X)$ splits as $Q \oplus R(n-1)$, where $Q$
    is a direct summand of $N \oplus N^\vee(d)$ and where $R$ is a
    direct summand of the motive of a variety of dimension at most one. In
particular, if $N$ is of abelian type, then $\h(X)$ is of
    abelian type.
  \end{proof}

  \begin{proof}[Proof of Theorem \ref{CK2}] We actually prove the
    following more general statement.  Let $X$ be a smooth projective
    variety of dimension $2n-1$ or $2n$ over $k$ and let $f : N \r
    \h(X)$ be a morphism inducing surjective maps $(f_\Omega)_* :
    \CH_i(N_\Omega)_{\sim} \r \CH_i(X_\Omega)_\sim$ for all $i \leq
    n-2$.  Assume that $N$ has a K\"unneth decomposition and is
    finite-dimensional. Then $X$ has a Chow--K\"unneth decomposition.
    Indeed, by Theorem \ref{factor-gen}, $\h(X)$ splits as a direct
    sum $Q \oplus R(n-2)$, where $Q$ is the direct summand of $N
    \oplus N^\vee(d)$, a motive that is finite-dimensional and
    satisfies the K\"unneth standard conjecture, and where $M_2$ is
    the direct summand of the motive of a curve or surface depending
    on the parity of $\dim X$. The motive $Q$ has a Chow--K\"unneth
    decomposition because it is finite-dimensional and has a K\"unneth
    decomposition (see the proof of Theorem \ref{t:fdck}). The motive $R(n-2)$
has a Chow--K\"unneth decomposition
    by \cite[Theorem 3.5]{Vial5}.
  \end{proof}

  \subsection{Applications to Chow--K\"unneth
    decompositions} \label{S:applications} As a straightforward
  consequence of Theorem \ref{factor-gen}, we deduce
  finite-dimensionality of the motive of certain $3$-folds and the
  existence of Chow--K\"unneth decomposition of certain $3$- and
  $4$-folds. Combined with results of \cite{Vial2}, we also check the
  validity of some of Murre's conjectures in those cases. The most
  general statement is the following.

\begin{theorem} \label{T:application} Let $X$ be a smooth projective
  variety over a field $k$. Assume that there exist a
  smooth projective variety $Y$ whose motive is of abelian type,
  as well as a correspondence $\Gamma \in \CH_{\dim Y}(Y \times X)$
  such that the induced map $(\Gamma_\Omega)_* : \CH_0(Y_{\Omega}) \r
  \CH_0(X_{\Omega})$ is surjective. Then, \vspace{2pt}

  \noindent $\bullet$ if $\dim X \leq 4$, $X$ has a Chow--K\"unneth
  decomposition which satisfies Murre's conjecture (B);

  \noindent $\bullet$ if $\dim X \leq 4$ and $\dim Y \leq 3$, $X$
  satisfies Murre's conjecture (D);

  \noindent $\bullet$ if $\dim X \leq 3$, $X$ is finite-dimensional;

  \noindent $\bullet$ if $\dim X \leq 3$ and $\dim Y \leq 2$, $X$
  satisfies Murre's conjecture (C).
\end{theorem}
\begin{proof}
  By Theorem \ref{factor-gen}, there is a decomposition $\h(X) = Q
  \oplus R(1)$, where $Q$ is isomorphic to a direct summand of $\h(Y)
  \oplus \h(Y)(\dim X - \dim Y)$ and where $R(1)$ is isomorphic to a
  direct summand of $\h(Z)(1)$ for some variety $Z$ of dimension at most two. By
assumption $\h(Y)$ is of abelian type, so that $Q$ is also of
  abelian type and hence has a Chow--K\"unneth decomposition. The
  motive $R(1)$ has a Chow--K\"unneth decomposition which satisfies
  Murre's conjectures (B) and (D) by \cite[Theorem 3.5]{Vial5}. Hence
  $X$ has a Chow--K\"unneth decomposition. It only remains to show
  that a motive $P$ which is of abelian type and which is isomorphic
  to the direct summand of the motive of a variety of dimension $d$,
  satisfies Murre's conjecture (B) if $d \leq 4$, satisfies Murre's
  conjecture (D) if $d \leq 3$, and satisfies Murre's conjecture (C)
  if $d \leq 2$. This is contained, respectively, in Theorem 4.5,
  Theorem 4.8 and Proposition 3.1 of \cite{Vial2}.
\end{proof}

Let $X$ be a smooth projective variety over $k$. Assume that there
exist smooth projective varieties $X_0, \ldots, X_{N-1}, X_N = X$ such
that, for all $n \leq N$, $X_n$ and $X_{n-1}$ satisfy one of the
following properties.

(1) There is a dominant rational map $\varphi_n : X_{n-1}
\dashrightarrow X_n$;

(2) There is a dominant morphism $\psi_n : X_n \rightarrow X_{n-1}$
whose generic fiber has trivial Chow group of zero-cycles after
base-change to a universal domain.

\begin{proposition} \label{P:CH0}
  Let $X$ be as above. Then there is a correspondence $\Gamma \in
  \CH^{\dim X}(X_0 \times X)$ such that $(\Gamma_\Omega)_* :
  \CH_0((X_0)_\Omega) \rightarrow \CH_0(X_\Omega)$ is surjective.
\end{proposition}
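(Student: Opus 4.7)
The plan is to proceed by induction on $N$. For $N=0$ one has $X=X_0$, so $\Gamma=\Delta_{X_0}$ works. For the inductive step, applying the proposition to $X_0,\ldots,X_{N-1}$ yields $\Gamma'\in\CH^{\dim X_{N-1}}(X_0\times X_{N-1})$ with $(\Gamma'_\Omega)_*\colon\CH_0((X_0)_\Omega)\to\CH_0((X_{N-1})_\Omega)$ surjective. It then suffices to produce $\Lambda\in\CH^{\dim X_N}(X_{N-1}\times X_N)$ such that $(\Lambda_\Omega)_*\colon\CH_0((X_{N-1})_\Omega)\to\CH_0((X_N)_\Omega)$ is surjective, for then $\Gamma:=\Lambda\circ\Gamma'$ does the job.

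In case (1), I would take $\Lambda:=[V]$, where $V\subset X_{N-1}\times X_N$ is the Zariski closure of the graph of the dominant rational map $\varphi_N$. The subvariety $V$ has dimension $\dim X_{N-1}$, so $[V]\in\CH^{\dim X_N}(X_{N-1}\times X_N)$, and a direct intersection-theoretic computation gives $[V]_*[x]=[\varphi_N(x)]$ for every closed $\Omega$-point $x$ in the domain of definition $U$ of $\varphi_N$. By Chevalley's theorem, the constructible set $\varphi_N(U)$ contains a dense open $W\subset X_N$; since $\Omega$ is a universal domain, every closed $\Omega$-point of $W$ admits an $\Omega$-rational preimage in $U$. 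Fulton's moving lemma for $0$-cycles on the smooth variety $X_N$ then shows that any class in $\CH_0((X_N)_\Omega)$ is rationally equivalent to a $0$-cycle supported on $W$, and so lies in the image of $[V]_*$.

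In case (2), I would choose, by Bertini, a smooth general complete intersection $S\subset X_N$ of dimension $\dim X_{N-1}$; by dominance of $\psi_N$, the restriction $\psi_N|_S\colon S\to X_{N-1}$ is surjective and generically finite of some degree $d>0$. Let $\Lambda$ be the image of $y\mapsto(\psi_N(y),y)$ from $S$ to $X_{N-1}\times X_N$; it is a subvariety of dimension $\dim X_{N-1}$, so $[\Lambda]\in\CH^{\dim X_N}(X_{N-1}\times X_N)$. For $x$ in the dense open $U'\subset X_{N-1}$ over which $\psi_N|_S$ is \'etale, $[\Lambda]_*[x]=\sum_{i=1}^d[y_i]$ where $\{y_1,\ldots,y_d\}=S\cap\psi_N^{-1}(x)$. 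By a spreading of the Bloch--Srinivas decomposition of the diagonal of the geometric generic fiber of $\psi_N$, whose $\CH_0$ is $\Q$ by hypothesis, there exists a dense open $U\subset X_{N-1}$ such that $\CH_0(\psi_N^{-1}(x))=\Q$ for every $x\in U(\Omega)$. For $x\in(U\cap U')(\Omega)$ and any $y\in\psi_N^{-1}(x)(\Omega)$ one then has $[\Lambda]_*[x]=d\cdot[y]$ in $\CH_0((X_N)_\Omega)$, so $[y]=\tfrac{1}{d}[\Lambda]_*[x]$ lies in the image of $[\Lambda]_*$. Fulton's moving lemma shows that any class in $\CH_0((X_N)_\Omega)$ is rationally equivalent to a $0$-cycle supported on the dense open $\psi_N^{-1}(U\cap U')\subset X_N$, which concludes case (2).

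The one technical delicacy is the Bloch--Srinivas spreading in case (2): triviality of $\CH_0$ of the geometric generic fiber must be propagated to triviality of $\CH_0$ of the geometric fibers over a dense open of $X_{N-1}$, which is achieved by spreading out a decomposition of the relative diagonal of $X_N\times_{X_{N-1}}X_N$. Granting this standard spreading step, both cases reduce to the two explicit constructions above (closure of graph; multisection), combined with Fulton's moving lemma and the universality of $\Omega$ to produce $\Omega$-rational preimages.
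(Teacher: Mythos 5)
Your proof is correct and ends up building the same correspondence the paper does, chained link by link along $X_0,\ldots,X_N$: in case (1) the class of the closure of the graph of $\varphi_n$, and in case (2) a multisection class, which is (up to a scalar) the correspondence $h^{\dim X_n - \dim X_{n-1}} \circ \psi_n^*$ the paper writes down for $h$ an ample class on $X_n$. The difference is entirely one of how much is re-derived. The paper's proof is two sentences: it records that $\varphi_*$ is surjective on $\CH_0$ for a dominant rational map (standard), and for case (2) cites \cite[Theorem 1.3]{Vial5} for the surjectivity of $h^{\dim X_n - \dim X_{n-1}} \circ \psi_n^*$, then composes. You instead prove both surjectivities from scratch. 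In case (1) the direct graph computation together with the moving lemma for $0$-cycles on a smooth projective variety is exactly the usual argument. In case (2), your spreading-and-specialization of a Bloch--Srinivas decomposition of the diagonal of the geometric generic fiber is precisely what \cite[Theorem 1.3]{Vial5} packages up; the ``technical delicacy'' you rightly flag --- that triviality of $\CH_0$ of the geometric generic fiber only propagates to fibers over the image of a dense open of a variety \emph{dominating} $X_{n-1}$, whence to a dense open of $X_{n-1}$ by Chevalley and the universality of $\Omega$ --- is the content of that reference. So your argument is a correct, self-contained unwinding of the paper's one-line proof rather than a genuinely different route; if you were writing this inside the paper you would simply cite \cite{Vial5} for case (2) and avoid the spreading altogether.
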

\begin{proof} Let $Y$ and $Z$ be two smooth projective varieties over
  $k$. On the one hand, a dominant rational map $\varphi : Y
  \dashrightarrow Z$ induces a surjection $\varphi_* : \CH_0(Y) \r
  \CH_0(Z)$. On the other hand, given a dominant morphism $\psi : Z \r
  Y$ as in (2) and given an ample class $h \in \CH^1(Z)$, there is a
  surjection $h^{\dim Z - \dim Y} \circ \psi^* : \CH_0(Y) \r
  \CH_0(Z)$. Here $h^{\dim Z - \dim Y}$ denotes the $(\dim Z - \dim
  Y)$-fold intersection with $h$; see \cite[Theorem 1.3]{Vial5}. This
  proves the proposition.
\end{proof}

An immediate consequence of Theorem \ref{T:application} and
Proposition \ref{P:CH0} is the following theorem.

\begin{theorem} \label{T:CKfindim}
  Let $X$ be as above and assume $X_0$ is a product of curves. Then,
  \vspace{2pt}

  \noindent $\bullet$ if $\dim X \leq 4$, $X$ has a Chow--K\"unneth
  decomposition which satisfies Murre's conjecture (B);

  \noindent $\bullet$ if $\dim X \leq 3$, $X$ is
  finite-dimensional and satisfies Murre's conjecture (D). \qed
\end{theorem}

\begin{example} Theorem \ref{T:CKfindim} notably applies to any smooth
  projective variety over $k$ which is rationally dominated by a
  product of curves. Thus a $3$-fold rationally dominated by a product
  of curves is finite-dimensional and a $4$-fold rationally
  dominated by a product of curves has a Chow--K\"unneth
  decomposition.
\end{example}

\begin{example}
  Theorem \ref{T:CKfindim} also applies to a smooth projective complex
  variety $X$ whose tangent bundle $T_X$ is nef, that is if the
  line-bundle $\mathcal{O}_{T_X}(1)$ on $\mathbb{P}(T_X)$ is nef. Indeed, a
  theorem of Demailly--Peternell--Schneider \cite{DPS} says that there
  is an \'etale cover $X' \r X$ of $X$ such that the Albanese morphism
  $X' \r \mathrm{Alb}_{X'}$ is a smooth morphism, whose fibres are
  smooth Fano varieties. Smooth Fano varieties are rationally
  connected and it follows that the generic fibre of $X' \r
  \mathrm{Alb}_{X'}$ is rationally connected and hence has trivial
  Chow group after base-change to a universal domain. Note that the
  case of a $3$-fold with a nef tangent bundle was taken care of, by a
  different method relying on a stronger classification result due to
  Campana--Peternell, in \cite{Iyer}.
\end{example}

\subsection{Application to smash-nilpotent $1$-cycles} \label{S:smash}
R. Sebastian \cite{sebastian} proved that every $1$-dimensional cycle
on a product of curves which is numerically trivial is
smash-nilpotent. Theorem \ref{factor-gen} makes it possible to extend
Sebastian's result.

\begin{theorem} \label{T:smash} Let $C_i$ be smooth projective curves
  and let $f: \h (\prod_i C_i) \r M$ be a morphism of effective
  motives over $k$. Assume that $(f_\Omega)_* : \CH_0(\prod_i
  C_{i,\Omega}) \r \CH_0(M_\Omega)$ is surjective. Then
  smash-nilpotence equivalence agrees with numerical equivalence on
  $1$-cycles on $M$.
\end{theorem}
\begin{proof}
  By Theorem \ref{factor-gen}, there is a splitting $M \cong Q \oplus
  R(1)$, where $Q$ is a direct summand of $ \h (\prod_i C_i)$ and
  where $R$ is effective.
  Therefore, $\CH_1(M)_{\mathrm{num}}$ is spanned via the action of
  correspondences by $\CH_1(\prod_i C_i)_{\mathrm{num}} \oplus
  \CH_0(R)_{\mathrm{num}}$. Numerically trivial cycles in $\CH_0(R)$
  are clearly smash-nilpotent and, by Sebastian's theorem, numerically
  trivial cycles in $\CH_1(\prod_i C_i)$ are smash-nilpotent.  The
  theorem is thus proved.
\end{proof}

\subsection{A splitting result without
  finite-dimensionality} \label{S:revisited} Let $f : N \r M$ be a
morphism of motives over $k$ such that $(f_\Omega)_* : \CH_i(N_\Omega)
\r \CH_i(M_\Omega)$ is surjective for all $i$ and assume that $N$ is
finite-dimensional. An immediate corollary to Theorem
\ref{factor-gen} (or rather its proof) is that $f$ has a
right-inverse.  In this paragraph, we show that, in the situation
above, the assumption that $N$ be finite-dimensional can be dropped.

\begin{theorem} \label{fdsummand2} Let $f : N \r M$ be a morphism of
  motives over $k$ such that $(f_\Omega)_* : \CH_*(N_\Omega) \r
  \CH_*(M_\Omega)$ is surjective. Then $f$ has a right-inverse.
\end{theorem}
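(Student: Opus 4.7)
The plan is to apply Theorem \ref{BS} exactly once and then show that the ``residual'' correspondence it produces can be absorbed into the image of composition with $f$. Reducing first to the case where $k$ is finitely generated (by a specialisation argument as in Lemma \ref{lemma2}), we write $M = (X,p,n)$ and use Lemma \ref{lemma2} to see that $(f_{k(X)})_*$ is surjective on $\CH_0$. Theorem \ref{BS} then produces a decomposition
$$\id_M = f \circ \Gamma_1 + \beta \circ \alpha,$$
with $\alpha : M \r V$ and $\beta : V \r M$, where $V = (Z,\Delta_Z,1)$ for some smooth projective variety $Z$ over $k$ of dimension at most $\dim X - 1$. If we can produce $\tilde\beta : V \r N$ with $f \circ \tilde\beta = \beta$, then $g := \Gamma_1 + \tilde\beta \circ \alpha$ satisfies $f \circ g = f \circ \Gamma_1 + \beta \circ \alpha = \id_M$, which is the desired right-inverse.

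Everything therefore reduces to the following lifting claim, which I plan to prove separately: \emph{if $(f_\Omega)_*$ is surjective on all Chow groups, then for every smooth projective variety $Z$ over $k$ and every integer $j$, the composition map $f \circ - : \Hom(\h(Z)(j), N) \r \Hom(\h(Z)(j), M)$ is surjective.} Applied to our $Z$ and $j = 1$, this furnishes the required $\tilde\beta$.

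I would prove this claim by induction on $\dim Z$. The base case $\dim Z = 0$ reduces, after passing through each residue field of $Z$, to the Chow-surjectivity hypothesis on $f$ combined with Lemma \ref{lemma2}. For the inductive step, I would view $\beta \in \Hom(\h(Z)(j), M)$ as a cycle on $Z \times X$ in the $p$-part, restrict it to the generic fibre to obtain a class over $k(Z)$, lift this restriction through $f_{k(Z)}$ (whose Chow-surjectivity follows from the hypothesis via Lemma \ref{lemma2} with base field $k(Z)$), spread the lift over an open subset of $Z$, and extend arbitrarily to an element $\tilde\beta^0 \in \Hom(\h(Z)(j), N)$. The error $\epsilon := f \circ \tilde\beta^0 - \beta$ then vanishes on the generic fibre, hence by the localisation exact sequence is supported on $D \times X$ for some proper closed $D \subsetneq Z$. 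Choosing an alteration $\sigma : D' \r D$ with $D'$ smooth projective, $\epsilon$ becomes the pushforward along $\sigma \times \id_X$ of a morphism $\h(D')(j) \r M$, which lifts through $f$ by the inductive hypothesis since $\dim D' < \dim Z$; pushing this lift forward along $\sigma \times \id$ on the $N$-side and subtracting it from $\tilde\beta^0$ yields the sought global lift of $\beta$.

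The hard part will be the d\'evissage in the inductive step: one has to track carefully the interplay between the generic-fibre lift obtained over $k(Z)$, the ambiguity in its extension to all of $Z$, and the descent of intermediate lifts from larger fields back to $k$ (handled as in the proof of Lemma \ref{lemma2}, by specialisation at closed points and averaging over Galois groups when necessary). Once the lifting claim is in place, a single application of Theorem \ref{BS} produces the right-inverse, thereby bypassing the finite-dimensionality assumption on $N$ that was used in Corollary \ref{fact-coro}.
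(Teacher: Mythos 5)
Your plan is essentially correct, and the central engine — proving, by induction on $\dim Z$, that $f\circ - : \Hom(\h(Z)(j),N)\to \Hom(\h(Z)(j),M)$ is surjective, via restriction to the generic fibre, lifting over $k(Z)$ (legitimized by reducing to $k$ finitely generated so that $k(Z)\hookrightarrow\Omega$ and Lemma~\ref{lemma2} applies), spreading, and controlling the error on a divisor — is exactly the inductive argument in the paper's proof of Theorem~\ref{fdsummand2}. Two small points of comparison. First, the paper avoids alterations in the inductive step by working with a refined action $f\otimes Z$ on Chow groups of possibly singular products (following Gros--Gillet), whereas you pass to a smooth alteration $D'\to D$; both are legitimate, and yours keeps the d\'evissage entirely within smooth projective varieties at the cost of invoking de~Jong's theorem. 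Second, your opening application of Theorem~\ref{BS} is actually redundant: once the lifting claim holds for all $(Z,j)$, you may apply it directly with $Z=X$ and $j=n$ (writing $M=(X,p,n)$) to lift $p\in\Hom(\h(X)(n),M)$ through $f$, and composing the resulting $g'\in\Hom(\h(X)(n),N)$ with the inclusion $M\hookrightarrow\h(X)(n)$ gives the right-inverse at once; this is precisely the Yoneda-type observation encapsulated in Kimura's Lemma~6.8, which the paper cites in place of Theorem~\ref{BS}. So your route is correct, but the detour through Theorem~\ref{BS} and the separate lift of $\beta$ at level $j=1$ re-derives a special case of a step you have already done in full generality.
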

\begin{proof}
   Up to replacing $k$ with a field of definition of $f$ which is
  finitely generated, we may assume that $\Omega$ is a universal
  domain over $k$.  Let's write $M=(X,p,n)$ and $N=(Y,q,m)$. Let $Z$
  be an irreducible variety over $k$ and let's define, as in
  \cite[proof of Lemma 1]{GG}, an action $f \otimes Z : \CH_*(Y \times
  Z) \r \CH_*(X \times Z)$ by $(f \otimes Z) \alpha :=
  (p_{X,Z})_*(p_{Y,Z}^* \alpha \cdot_{p_{Y,X}} f)$. Here, $p_{X,Y}$,
  $p_{Y,Z}$ and $p_{X,Z}$ are the natural projections, and the product
  ``$\cdot_{p_{Y,X}}$'' is Fulton's refined intersection \cite[\S
  8]{Fulton} with respect to the projection $p_{Y,X} : Y \times X
  \times Z \r Y \times X$. By \cite[Corollary 8.1.2]{Fulton}, it can
  be checked that, when $Z$ is smooth (projective), we have $(f
  \otimes Z) \alpha = (f,\id_Z)_*\alpha$.

  We are going to prove by induction on $\dim Z$ that $f \otimes Z :
  \CH_*(Y \times Z) \r \CH_*(X \times Z)$ has same image as $p \otimes
  Z : \CH_*(X \times Z) \r \CH_*(X \times Z)$. Let $K$ be the function
  field of $Z$ over $k$. We have a diagram with exact rows
  \begin{center}
    $\xymatrix{ \bigoplus \CH_*(Y \times D) \ar[d]^{\bigoplus f
        \otimes D} \ar[r] & \CH_*(Y \times Z) \ar[d]^{f \otimes Z}
      \ar[r] & \CH_*(Y_K) \ar[d]^{f\otimes K} \ar[r] & 0 \\
      \bigoplus \CH_*(X \times D) \ar[r] & \CH_*(X \times Z) \ar[r] &
      \CH_*(X_K) \ar[r] & 0. }$
  \end{center}
  The direct sums are taken over all closed irreducible
  codimension-$1$ subschemes of $Z$ and the left horizontal arrows are
  induced by the natural inclusions. Moreover, the diagram is
  commutative. The left-hand square is commutative by the projection
  formula \cite[Proposition 8.1.1.(c)]{Fulton} and the right-hand
  square is commutative by \cite[Theorem 6.4]{Fulton} and by
  \cite[Proposition 8.1.1.(d)]{Fulton} which is valid for ``regular
  imbedding'' replaced by ``l.c.i morphism'' thanks to
  \cite[Proposition 6.6.(c)]{Fulton}.
  By assumption and by Lemma \ref{lemma2},
  for all $\alpha \in \CH_*(X_K)$, there is a $\beta \in \CH_*(Y_K)$
  such that $(p \otimes K)\alpha = (f \otimes K) \beta$.  By
  induction, there is also, for all irreducible divisors $D$ in $Z$
  and all $\gamma \in \CH_*(X \times D)$, a cycle $\delta \in \CH_*(Y
  \times D)$ such that $(p\otimes D)\gamma = ( f\otimes D) \delta$.
  Therefore, by a simple diagram-chase and by commutativity of the
  similar diagram involving $\bigoplus p \otimes D$, $p \otimes Z$ and
  $p \otimes K$, we see that the middle vertical map has image
  coinciding with the image of $p \otimes Z$.

  Thus, $(f \times \id_Z)_* : \CH_*(N \otimes \h(Z)) \r \CH_*(M
  \otimes \h(Z))$ is surjective for all smooth projective varieties
  $Z$ over $k$. By Kimura \cite[Lemma 6.8]{Kimura}, it follows that
  $f$ has a right-inverse.
\end{proof}

\subsection{Homological motives and splittings} \label{S:hom} Theorem
\ref{fdsummand2} can be thought of as an analogue modulo rational
equivalence of Theorem \ref{T:splithomol} below which is concerned
with motives modulo homological equivalence. The arguments used in
this section go back to Andr\'e \cite{Andre96} and improve slightly on
the results of Arapura \cite{Arapura}. The difference with
\cite{Arapura} is that we are able to ignore the middle cohomology of
$X$; see Proposition \ref{Ar2}.

Recall \cite{Kleiman} that, in characteristic zero, for a smooth
projective variety, the standard conjectures reduce to the Lefschetz
standard conjecture.  Given a smooth projective variety $X$ of
dimension $d$ over $k$, we write $H^i(X)$ for the Betti cohomology of
$X$ with rational coefficients and we write $H_i(X)$ for
$H^{2d-i}(X)$.

\begin{proposition} \label{Ar2} Assume that $\mathrm{char} \, k = 0$.
  Let $X$ and $Y$ be smooth projective varieties over $k$ such that
  there exists a morphism $f : \bigoplus_{m,n} \h(Y)^{\otimes n}(m) \r
  \h(X)$ that induces a surjection on $H_i(X)$ for all $i > \dim X$.
  Assume that $Y$ satisfies the standard conjectures. Then $X$ also
  satisfies the standard conjectures.
\end{proposition}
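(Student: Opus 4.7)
The plan is to reduce to the Lefschetz standard conjecture $B(X)$ by Kleiman's theorem \cite{Kleiman}, which says that in characteristic zero the full package of standard conjectures for a smooth projective variety is equivalent to $B(X)$. So it is enough to exhibit, for every $i \le d := \dim X$, an algebraic correspondence inverting the hard Lefschetz isomorphism $L^{d-i} : H^i(X) \to H^{2d-i}(X)$.

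First I would establish that the source $N := \bigoplus_{m,n} \h(Y)^{\otimes n}(m)$ of $f$ satisfies the standard conjectures. Since $B$ is stable under products, direct sums, duals and twists (Kleiman \cite{Kleiman}), this follows from the assumption that $Y$ does. In particular, by a second application of Kleiman, homological and numerical equivalence coincide on $N$, so Jannsen's semisimplicity theorem \cite{Jannsen3} gives that $N^\hom$ is a semisimple object in the category of homological motives over $k$. I would then split the morphism $f^\hom : N^\hom \to \h(X)^\hom$: its image $I$ is, by semisimplicity of $N^\hom$, a direct summand of $N^\hom$, hence itself satisfies the standard conjectures. The hypothesis on $f_*$ translates into saying that the cohomology realisation of $I$ surjects onto $H^j(X)$ for every $j < d$ (after identifying $H_i = H^{2d-i}$).

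Next I would upgrade this to a statement in all degrees different from $d$. The dual morphism $f^\vee : \h(X)^\vee \to N^\vee$ induces, after identifying $\h(X)^\vee \cong \h(X)(d)$, a second algebraic map from $\h(X)$ (suitably twisted) into a motive of the same form as $N$. The same semisimplicity and splitting argument applied to $f^\vee$ produces, for each $j > d$, a direct summand of a twist of $N^\hom$ whose cohomology surjects onto $H^j(X)$. Combined, we get that all of $H^*(X)$ except possibly in middle degree is a homomorphic image, via an algebraic correspondence, of the cohomology of a motive satisfying $B$.

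Finally I would transport the Lefschetz inverse from $N$ (and its dual) to $X$. The Lefschetz operator $L_X = \cup [H]$ is always algebraic, and its interaction with $f$ via the projection formula shows that $f^\hom$ intertwines $L_X$ with a corresponding (algebraic) operator on $I$. Since $B$ holds on the relevant direct summands of $N^\hom$ we can pick an algebraic inverse there, transport it along $f^\hom$ (resp.\ along the dual map constructed above), and use the cohomological surjection established in the previous step together with semisimplicity of $I$ to correct by an algebraic projector so that the resulting correspondence on $X \times X$ induces the inverse of $L^{d-i}$ on $H^i(X)$ for every $i < d$; the case $i > d$ is dual. The middle degree $i = d$ requires no further argument since $L^0 = \id$ is trivially algebraic. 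The main obstacle I anticipate is this last transport step: one must keep track of how the chosen polarisation on $X$ relates to polarisations of the factors making up $N$, but this is cosmetic because algebraicity of $B$ is independent of the choice of ample class, so one may work with the polarisation on $X$ throughout and only use $B$ on the summands of $N$ to invert its image under $f^\hom$.
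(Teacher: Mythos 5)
Your first reduction — to the Lefschetz standard conjecture $B(X)$ via Kleiman — is exactly the paper's, and the observation that $N := \bigoplus_{m,n}\h(Y)^{\otimes n}(m)$ satisfies the standard conjectures is correct and used by the paper as well. From there, however, your route diverges from the paper's, and the final ``transport'' step contains a genuine gap.

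The paper does \emph{not} appeal to Jannsen's semisimplicity theorem at all; its proof is an explicit construction. Fix a polarisation on $Y$, and for $i > \dim X$ set $\varphi_{m,n} := (f_{m,n})_* \circ *_L \circ L \circ f_{m,n}^*$, where $*_L$ is the (algebraic, because $Y^n$ satisfies the standard conjectures) Lefschetz involution on $Y^n$ and $L$ its Lefschetz isomorphism. The crucial point is that $\langle L^{-1}-, *_L- \rangle$ is a polarisation on $H_{i-2m}(Y^n)$, so by elementary linear algebra (a map of the form $A \circ S \circ A^{\vee}$ with $S$ definite has image $\operatorname{im}A$) the algebraic operator $\sum_{m,n}\varphi_{m,n}: H^i(X) \to H_i(X)$ has image equal to $\operatorname{im}(f_*) = H_i(X)$. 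It is therefore an isomorphism, and a Cayley--Hamilton argument then exhibits $(L_X^{\,\dim X - (2\dim X - i)})^{-1}$ as an algebraic correspondence. This positivity input (Hodge--Riemann, via the polarisation) is what makes the proof close.

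Your write-up omits this mechanism. Two concrete problems. First, the claim that ``$f^{\hom}$ intertwines $L_X$ with a corresponding operator on $I$ via the projection formula'' is false: $f$ is a general correspondence, not (the graph of) a morphism, and there is no reason whatsoever for $f$ to be compatible with the two Lefschetz operators; this is precisely why the paper has to go through the adjoint pair $(f^*, f_*)$ and the intrinsic polarisation on $Y^n$ rather than any naive ``transport.'' Second, the step ``correct by an algebraic projector so that the resulting correspondence on $X\times X$ induces the inverse of $L^{d-i}$'' begs the question: producing an algebraic idempotent on $\h(X)^{\hom}$ projecting onto the image of $I$ is not a consequence of semisimplicity of $N^{\hom}$ (you only control $\operatorname{End}(N^{\hom})$, not $\operatorname{End}(\h(X)^{\hom})$, which is precisely what you do not yet know to be semisimple). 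If you want to build the required projector algebraically, the only tool available is the positivity of the Hodge--Riemann pairing on $N$ and the algebraicity of $*_L$ there — and once you spell that out, you have essentially rewritten the paper's argument. As a minor point, the ``dual'' passage to degrees $>d$ is superfluous: $B(X)$ is a self-dual statement, so establishing the algebraic inverse to the Lefschetz isomorphism in degrees on one side of the middle already gives the full standard conjectures by Kleiman.
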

\begin{proof} If $Y$ satisfies the standard conjectures, then so do
  all of its powers. For varieties satisfying the standard
  conjectures, the Lefschetz involution $*_L$ on cohomology is induced
  by a correspondence; see \cite{Andre96} and \cite{Kleiman}. Let's
  fix a polarization on $Y$ and let's endow $Y^n$ with the product
  polarization. Let $f_{m,n}$ be the restriction of $f$ to $
  \h(Y)^{\otimes n}(m)$ and, for $i > \dim X$, consider the composite
  map $$\varphi_{m,n} : H^i(X) \arr{f_{m,n}^*} H^{i-2m}(Y^n) \arr{L}
  H_{i-2m}(Y^n) \arr{*_L} H_{i-2m}(Y^n) \arr{(f_{m,n})_*} H_i(X) .$$
  Here the map $L$ is the Lefschetz isomorphism with respect to the
  polarization on $Y^n$. It is induced by a correspondence by
  assumption. Since $H_{i-2m}(Y^n)$ is polarized with respect to the
  bilinear form $\langle\, L^{-1}- ,*_L- \, \rangle$, we obtain that
  $\varphi_{m,n}$ has same image as $(f_{m,n})_*$. Thus, there exists
  $h : \h(X)(-d) \r \bigoplus_{m,n} \h(Y)^{\otimes n}(m)$ such that $f
  \circ h$ induces a surjective map $H^i(X) \r H_i(X)$. It is then
  classical to deduce, thanks to the theorem of Cayley--Hamilton, that
  the inverse to the Lefschetz isomorphism $H_i(X) \r H^i(X)$ is
  induced by a correspondence. Thus, $X$ satisfies the Lefschetz
  standard conjecture and hence the standard conjectures.
\end{proof}

\begin{remark} \label{lef} In some cases, Proposition \ref{Ar2} can be
  slightly improved. Indeed, if $Y$ is a $3$-fold, although $Y$ might
  not be known to satisfy the Lefschetz standard conjecture, it is
  known that the Lefschetz involution $*_L$ on $H^3(Y)$ is algebraic.
  It is thus possible to prove the following statement.  Let $X$ be a
  smooth projective variety of dimension $d$ defined over a subfield
  of $\C$ and let $i>d$.  Assume that $H_i(X) = \widetilde{N}^{\lfloor
    i/2 \rfloor -1} H_i(X)$. Here $\N$ denotes the niveau filtration
  defined in \cite{Vial2}.  In other words assume that \vspace{2pt}

  $\bullet$ if $i$ is odd, then there exist a threefold $Y_i$ and a
  correspondence $\Gamma_i \in \CH_{(i+3)/2}(Y_i \times X)$ such
  that $(\Gamma_i)_* : H_3(Y_i) \r H_i(X)$ is surjective, and

  $\bullet$ if $i$ is even, then there exist a surface $Z_i$ and a
  correspondence $\Gamma_i \in \CH_{(i+2)/2}(Z_i \times X)$ such
  that $(\Gamma_i)_* : H_2(Z_i) \r H_i(X)$ is surjective.
  \vspace{2pt}

  \noindent Then the inverse to the Lefschetz isomorphism $L^{d-i} :
  H_i(X) \r H^i(X)$ is induced by an algebraic correspondence.\qed
  \end{remark}

A combination of Proposition \ref{BS} and Proposition \ref{Ar2} gives a
criterion on $\CH_0(X)$ for a complex fourfold $X$ to satisfy the
Lefschetz standard conjecture.

\begin{proposition} \label{stdth} Let $X$ be a smooth projective
  variety of dimension $d \leq 4$ over $\C$. Assume that there exist a
  smooth projective variety $Y$ which satisfies Grothendieck's
  Lefschetz standard conjecture, as well as a correspondence $\Gamma
  \in \CH_{\dim Y}(Y \times X)$ such that $\Gamma_* : \CH_0(Y) \r
  \CH_0(X)$ is surjective.  Then $X$ satisfies the Lefschetz standard
  conjecture.
 \end{proposition}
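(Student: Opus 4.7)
The plan is to combine Theorem \ref{BS} with Proposition \ref{Ar2}, making use of the classical fact that smooth projective varieties of dimension at most $3$ satisfy Grothendieck's Lefschetz standard conjecture.

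First, I would apply Theorem \ref{BS} to the morphism $\Gamma : \h(Y) \r \h(X)$. By Lemma \ref{lemma}, the surjectivity of $\Gamma_* : \CH_0(Y) \r \CH_0(X)$ over $\C$ implies the surjectivity of $(\Gamma_{\C(X)})_* : \CH_0(Y_{\C(X)}) \r \CH_0(X_{\C(X)})$. Theorem \ref{BS} then produces a decomposition $\Delta_X = \Gamma \circ \Gamma_1 + \alpha \circ \beta$ for some $\Gamma_1 : \h(X) \r \h(Y)$, $\beta : \h(X) \r \h(Z)(1)$, and $\alpha : \h(Z)(1) \r \h(X)$, where $Z$ is a smooth projective variety of dimension $d - 1 \leq 3$.

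Taking the induced action on Betti homology, this decomposition reads, for every integer $i$,
$$\id_{H_i(X)} = \Gamma_* \circ (\Gamma_1)_* + \alpha_* \circ \beta_*,$$
where the second summand factors through $H_i(\h(Z)(1)) = H_{i-2}(Z)$. Consequently, the morphism $f := (\Gamma, \alpha) : \h(Y) \oplus \h(Z)(1) \r \h(X)$ induces a surjection on $H_i(X)$ for every $i$, and in particular for all $i > \dim X$.

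Since $\dim Z \leq 3$, the variety $Z$ classically satisfies the Lefschetz standard conjecture, and so does $W := Y \sqcup Z$ by the hypothesis on $Y$. As $\h(Y) \oplus \h(Z)(1)$ is a direct summand of $\h(W) \oplus \h(W)(1)$, the morphism $f$ extends (by zero on the complementary summands) to a morphism $\bigoplus_{m,n} \h(W)^{\otimes n}(m) \r \h(X)$ which is still surjective on $H_i(X)$ for all $i > \dim X$. Proposition \ref{Ar2} applied with $W$ in place of $Y$ then yields that $X$ satisfies the Lefschetz standard conjecture. The substantive step is the decomposition from Theorem \ref{BS}; the only further ingredient is the classical knowledge of the Lefschetz standard conjecture in dimension at most three.
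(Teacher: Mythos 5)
Your approach is the same as the paper's: combine Theorem \ref{BS} with Proposition \ref{Ar2} by pushing the decomposition $\Delta_X = \Gamma\circ\Gamma_1 + \alpha\circ\beta$ into cohomology. However, your appeal to the ``classical fact that smooth projective varieties of dimension at most $3$ satisfy Grothendieck's Lefschetz standard conjecture'' is a genuine gap. This is not known: the Lefschetz standard conjecture is classical only in dimension $\leq 2$ (and for abelian varieties, etc.), and for a general $3$-fold $Z$ the algebraicity of the inverse of $L : H^2(Z) \to H^4(Z)$ is open. The paper's own Remark \ref{lef}, placed immediately before the proposition you are proving, states this explicitly: ``if $Y$ is a $3$-fold, although $Y$ might not be known to satisfy the Lefschetz standard conjecture, it is known that the Lefschetz involution $*_L$ on $H^3(Y)$ is algebraic.''

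When $\dim X = 4$, so that $\dim Z = 3$, your application of Proposition \ref{Ar2} with $W = Y \sqcup Z$ is therefore unjustified: the hypothesis that $W$ satisfy the standard conjectures cannot be verified for the $Z$ factor. The argument can be repaired by observing that the proof of Proposition \ref{Ar2} only uses the Lefschetz involution on the source in the specific cohomological degrees that arise from the decomposition (namely, degrees where $*_L$ is classically algebraic for curves, surfaces and $3$-folds), or equivalently by invoking the variant of Proposition \ref{Ar2} explained in Remark \ref{lef} — which is what the paper's terse citation of Proposition \ref{Ar2} must be read as including. As written, your proof asserts an unproven conjecture at the crucial step.
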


\begin{proof} By Lemma \ref{lemma} and Proposition \ref{BS}, we get that
$$\Delta_X = \Gamma\circ \Gamma_1 + \Gamma_2 \in \CH_d(X \times X),$$
for some correspondence $\Gamma_1 \in \CH_{d}(X \times Y)$ and some
correspondence $\Gamma_2 \in \CH_{d}(X \times X)$ that
factors as $\Gamma_2 = \alpha \circ \beta$ for some smooth projective
variety $Z$ and for some $\beta \in \CH^d(X \times Z)$ and some
$\alpha \in \CH_d(Z \times X)$.  By looking at the action of
$\Delta_X$ on $H^i(X)$, we see that
$$H^i(X) = \Gamma_1^*H^i(Y) + \beta^* H^{i-2}(Z).$$ This
settles the proposition thanks to Proposition \ref{Ar2}.
\end{proof}

\begin{theorem} \label{T:splithomol} Assume that $\mathrm{char} \, k =
  0$.  Let $X$ and $Y$ be smooth projective varieties over $k$ such
  that there exists a morphism $f : \bigoplus_{m,n} \h(Y)^{\otimes
    n}(m) \r \h(X)$ that induces a surjection on $H_i(X)$ for all $i$.
  Assume that $Y$ satisfies the standard conjectures. Then $f$ has a
  right-inverse modulo homological equivalence. If, moreover, $\h(X)$
  is finite-dimensional, then $f$ has a right-inverse.
\end{theorem}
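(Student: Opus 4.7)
The plan is to proceed in three steps. First, I apply Proposition \ref{Ar2} to conclude that $X$ also satisfies the standard conjectures. Second, I construct a right-inverse of $f$ modulo homological equivalence by a Lefschetz/Cayley--Hamilton argument parallel to the one in the proof of Proposition \ref{Ar2}. Third, for the finite-dimensionality half, I lift this right-inverse using Lemma \ref{inverse}. Since $H_*(X)$ is finite-dimensional and $f_*$ is surjective onto it, only finitely many summands of $N := \bigoplus_{m,n} \h(Y)^{\otimes n}(m)$ contribute, so I may assume $N$ is a finite direct sum with components $f_{m,n} : \h(Y)^{\otimes n}(m) \to \h(X)$.

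For the right-inverse modulo homological equivalence, I fix a polarization on $Y$ and induce polarizations on all $Y^n$ and on $X$; by the Lefschetz standard conjecture (holding for $Y$ by hypothesis and for $X$ by Proposition \ref{Ar2}), the Lefschetz operators $L$, their inverses in the appropriate degree range, and the Hodge--Lefschetz involutions $*_L$ are all induced by algebraic correspondences. As in the proof of Proposition \ref{Ar2}, for each $i$ and each $(m,n)$ the composite
\[
H^i(X) \xrightarrow{(f_{m,n})^*} H^{i-2m}(Y^n) \xrightarrow{L} H_{i-2m}(Y^n) \xrightarrow{*_L} H_{i-2m}(Y^n) \xrightarrow{(f_{m,n})_*} H_i(X)
\]
is induced by an algebraic correspondence and has the same image as $(f_{m,n})_*$. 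Summing over $(m,n)$ and over $i$, and using the algebraic K\"unneth decomposition of $\h(X)$, I obtain an algebraic correspondence $g_0 : \h(X) \to N$ with the property that the image of $f \circ g_0$ on $H_*(X)$ agrees with the image of $f_*$, namely all of $H_*(X)$. For dimensional reasons $f \circ g_0$ is then an automorphism of $H_*(X)$, and since $\End\bigl(\h(X)^{\hom}\bigr)$ is a finite-dimensional $\Q$-subalgebra of $\End(H_*(X))$, Cayley--Hamilton provides an inverse to $f \circ g_0$ inside $\End\bigl(\h(X)^{\hom}\bigr)$. Setting $g := g_0 \circ (f \circ g_0)^{-1}$ yields the desired right-inverse of $f$ modulo homological equivalence.

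For the second half, homological equivalence implies numerical equivalence, so $g$ is also a right-inverse of $f$ modulo numerical equivalence, and the right-inverse version of Lemma \ref{inverse}, applied to the finite-dimensional motive $\h(X)$, lifts $g$ to a genuine right-inverse of $f$. The main obstacle is implementing the Lefschetz construction of $g_0$ uniformly in all cohomological degrees; in particular, for degrees $i$ in which the relevant power of $L$ on $H^{i-2m}(Y^n)$ lies in the descending range, one must appeal to the algebraicity of $L^{-1}$ provided by the Lefschetz standard conjecture, not merely to the algebraicity of $L$ itself.
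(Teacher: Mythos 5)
Your proof is correct, and it reaches the two conclusions of the theorem by the same two bookends as the paper — Proposition \ref{Ar2} to propagate the standard conjectures to $X$, and Lemma \ref{inverse} (via numerical equivalence being coarser than homological) to lift the homological section to a genuine one — but the middle step is genuinely different. The paper simply invokes Andr\'e's result that the full, thick, rigid tensor subcategory of homological motives generated by varieties satisfying the standard conjectures is abelian semi-simple, so that any epimorphism splits. You instead construct the section directly: using the polarizations and the algebraicity of the Lefschetz operators and involutions (including $L^{-1}$ in the descending range, which you correctly flag as requiring the Lefschetz standard conjecture rather than mere hard Lefschetz), you build an algebraic $g_0$ that is an ``adjoint'' to $f$, observe that $f\circ g_0$ is surjective and hence invertible on $H_*(X)$ by dimension count, and then invert it inside the finite-dimensional $\Q$-algebra $\End(\h(X)^{\hom})$ by Cayley--Hamilton. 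This is essentially the semi-simplicity argument with the hood open: what you gain is a self-contained, explicit construction of the section in terms of Lefschetz data (with no appeal to the structure theory of numerical motives beyond faithfulness of $H_*$), at the cost of the bookkeeping you acknowledge about degrees and K\"unneth components; what the paper's citation of Andr\'e buys is brevity and the avoidance of that bookkeeping. Your preliminary remark that only finitely many $(m,n)$ contribute (so the source may be taken to be an honest finite direct sum) is also a point the paper leaves implicit and is worth making.
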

\begin{proof}
  By Proposition \ref{Ar2}, $X$ satisfies the standard conjectures.
  By Andr\'e \cite[\S 4]{Andre96}, the full, thick and rigid
  sub-category of homological motives spanned by the motives of $X$
  and $Y$ is semi-simple. Therefore, $f$ has a right-inverse modulo
  homological equivalence. The last statement follows from Lemma
  \ref{split}.
\end{proof}

\begin{proof}[Proof of Theorem \ref{T:abhomo}] This is a direct
  consequence of Theorem \ref{T:splithomol} because a motive of
  abelian type is a direct factor of the motive of a product of curves
  and because a product of curves satisfies the standard conjectures.
\end{proof}

\vspace{7pt}
\def\cprime{$'$}

\end{document}